\newcommand{\NEW}[1]{\begingroup\color{red}#1\endgroup}
\newcommand{\TODO}[1]{\begingroup\color{blue}#1\endgroup}
\newtheorem{theorem}{Theorem}[section]
\newtheorem{lemma}[theorem]{Lemma}
\newtheorem{proposition}[theorem]{Proposition}
\newtheorem{corollary}[theorem]{Corollary}
\theoremstyle{remark}
\makeatletter \@addtoreset{equation}{section}
\title{Random $k$-noncrossing partitions}
\author{Jing Qin  and Christian M. Reidys}
\email{duck@santafe.edu}
\date{}
\keywords{$k$-noncrossing partition, $2$-regular,
$(k-1)$-noncrossing partition, uniform generation, kernel method}
\begin{document}
\maketitle
\begin{abstract}
In this paper, we introduce polynomial time algorithms that generate
random $k$-noncrossing partitions and 2-regular, $k$-noncrossing
partitions with uniform probability. A $k$-noncrossing partition
does not contain any $k$ mutually crossing arcs in its canonical
representation and is $2$-regular if the latter does not contain
arcs of the form $(i,i+1)$. Using a bijection of Chen~{\it et al.}
\cite{Chen,Reidys:08tan}, we interpret $k$-noncrossing partitions
and $2$-regular, $k$-noncrossing partitions as restricted
generalized vacillating tableaux. Furthermore, we interpret the
tableaux as sampling paths of a Markov-processes over shapes and
derive their transition probabilities.
\end{abstract}


\section{Introduction}\label{S:intro}
Recently Chen {\it et.~al.}~\cite{Chen:pnas} studied a computational
paradigm for $k$-noncrossing RNA pseudoknot structures. The authors
regarded $k$-noncrossing matchings as oscillating tableaux
\cite{Chen} and interpreted the latter as stochastic processes over
Young tableaux of less than $k$ rows. According to which, they
generated $k$-noncrossing RNA pseudoknot structures
\cite{Reidys:07pseu} with uniform probability. Since the generating
function of oscillating lattice walks in $\mathbb{Z}^{k-1}$, that
remain in the interior of the dominant Weyl chamber is given as a
determinant of Bessel functions \cite{Grabiner}, it is $D$-finite.
As a result, the transition probabilities of the Markov-process can
be derived in polynomial time as a pre-processing step and each
$k$-noncrossing RNA pseudoknot structure can be generated in linear
time.

The analogue of the above result is for $k$-noncrossing partitions
much more involved. Only for $k=3$, Bousquet-M\'{e}lou and Xin
\cite{Xin:06} compute the generating function and conjecture that
$k$-noncrossing partitions are not $P$-recursive for $k\geq 4$.
\NEW{Since the enumerative result are unavailable for $k\geq 4$, it
is important to construct an algorithm which can uniformly generate
$k$-noncrossing partitions in order to obtain statistic results.}

In this paper, we show that $k$-noncrossing partitions and
$2$-regular, $k$-noncrossing partitions can be uniformly generated
in linear time. We remark that there does not exist a general
framework for the uniform generation of elements of a non-inductive
combinatorial class. In this context relevant work has been done by
Wilf \cite{Wilf:77} as well as \cite{Wilf:book}.

The paper is organized as follows: in Section~\ref{S:basic}, we
present all basic facts on $k$-noncrossing partitions, vacillating
tableaux and lattice walks. In Section~\ref{S:pk}, we compute the
number of lattice walks ending at arbitrary $\nu\in Q_{k-1}$ from
the construction and prove Theorem~\ref{T:algpk}, the uniform
generation result for $k$-noncrossing partitions. In
Section~\ref{S:p2k}, we show how to uniformly generate $2$-regular,
$k$-noncrossing partitions. To this end, we establish a bijection
between lattice walks associated to partitions and braids
\cite{Reidys:08tan}. We will show that walks associated with
$2$-regular partitions correspond to walks associated to loop-free
braids. The latter can easily be dealt with via the
inclusion-exclusion principle. In Theorem~\ref{T:regpart} we present
an algorithm that uniformly generates $2$-regular, $3$-noncrossing
partitions in linear time. \NEW{Finally, in Section~\ref{S:4}, we
show the particular algorithms in case of $k=4$.}

We remark that the generation of random $2$-regular, $k$-noncrossing
partitions has important applications in computational biology. The
latter allow to represent so called base triples in three
dimensional RNA structures, see \cite{Reidys:frame} for details of
the framework. The uniform generation then facilitates energy-based,
{\it ab initio} folding algorithms along the lines of
\cite{Chen:pnas}. This paper is accompanied by supplemental
materials containing MAPLE implementations of the algorithms,
available at ${\tt www.combinatorics.cn/cbpc/par.html}$.

\section{Some basic facts}\label{S:basic}

A \emph{set partition} $P$ of $[N]=\{1,2,\dots, N\}$ is a collection
of nonempty and mutually disjoint subsets of $[N]$, called blocks,
whose union is $[N]$. A $k$-noncrossing partition is called
\emph{$m$-regular}, $m\geq 1$, if for any two distinct elements $x$,
$y$ in the same block, we have $\vert x-y \vert \geq m$. A
\emph{partial matching} and a \emph{matching} is a particular type
of partition having block size at most two and exactly two,
respectively. Their standard representation is a unique graph on the
vertex set $[N]$ whose edge set consists of arcs connecting the
elements of each block in numerical order, see Fig.\ref{F:parcro1}.

\TODO{maybe new similar graph}
\begin{figure}[ht]
\centerline{%
\epsfig{file=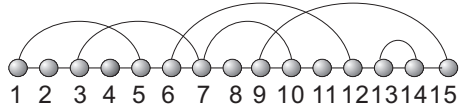,width=0.4\textwidth}\hskip15pt
 }
\caption{\small Standard representation and $k$-crossings: we
display the partition $\{2\}$, $\{8\}$, $\{11\}$, $\{1,5\}$,
$\{6,12\}$, $\{9,15\}$, $\{13,14\}$, $\{3,7,10\}$ of $[15]$.
Elements within blocks are connected in numerical order. The arcs
$\{(1,5),(3,7)\}$, $\{(3,7),(6,12)\}$ $\{(7,10),(9,15)\}$ and
$\{(6,12),(9,15)\}$ are all $2$-crossings. } \label{F:parcro1}
\end{figure}

Given a (set) partition $P$, a \emph{$k$-crossing} is a set of $k$
edges $\{(i_{1},j_{1}),(i_{2},j_{2}),\ldots, (i_{k},j_{k})\}$ such
that $i_{1}< i_{2}< \ldots < i_{k}< j_{1}< j_{2}< \ldots < j_{k}$,
see Fig.\ref{F:parcro1}. A \emph{$k$-noncrossing partition} is a
partition without any $k$-crossings.  We denote the sets of
$k$-noncrossing partitions and $m$-regular, $k$-noncrossing
partitions by $\mathcal{P}_k(N)$ and $\mathcal{P}_{k,m}(N)$,
respectively. For instance, the set of $2$-regular, $3$-noncrossing
partitions are denoted by $\mathcal{P}_{3,2}(N)$.

A \emph{(generalized)} vacillating tableau \cite{Reidys:08tan}
$V_{\lambda}^{2N}$ of shape $\lambda$ and length $2N$ is a sequence
$\lambda^{0}, \lambda^{1},\ldots,\lambda^{2N}$ of shapes such that
{\sf (1)} $\lambda^{0}=\varnothing$, $\lambda^{2N}=\lambda$ and {\sf
(2)} for $1\le i\le N$, $\lambda^{2i-1},\lambda^{2i}$ are derived
from $\lambda^{2i-2}$ by \emph{elementary moves} (EM) defined as
follows: $(\varnothing,\varnothing)$: do nothing twice;
$(-\square,\varnothing)$: first remove a square then do nothing;
$(\varnothing,+\square)$: first do nothing then add a square; $(\pm
\square,\pm \square)$: add/remove a square at the odd and even
steps, respectively. We use the following notation: if
$\lambda_{i+1}$ is obtained from $\lambda_{i}$ by adding, removing a
square from the $j$-th row, or doing nothing we write
$\lambda_{i+1}\setminus\lambda_{i}=+\square_{j}$,
$\lambda_{i+1}\setminus\lambda_{i}= -\square_{j}$ or
$\lambda_{i+1}\setminus\lambda_{i}=\varnothing$, respectively, see
Fig.\ref{F:tableaux}.

\begin{figure}[ht]
\centerline{\includegraphics[width=0.75\textwidth]{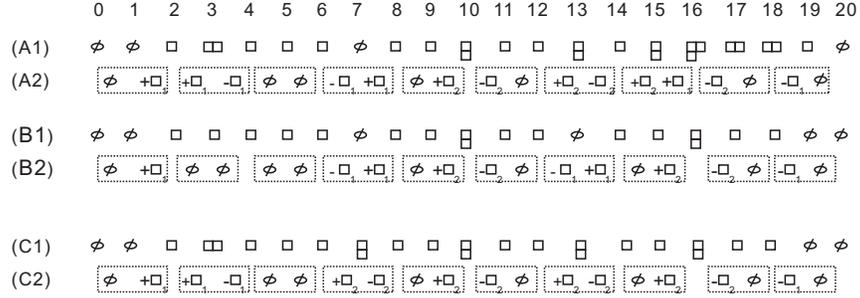}}
\caption{\small
 Vacillating tableaux and elementary moves:
\textsf{(A1)} shows a general vacillating tableaux with EM
\textsf{(A2)}. \textsf{(B1)} displays another vacillating tableaux
and its set of EM, $\{(-\square,\varnothing),
(\varnothing,+\square),(\varnothing,\varnothing),(-\square,+\square)\}$
shown in \textsf{(B2)}. In \textsf{(C1)} we present a vacillating
tableaux with the EM $\{(-\square,\varnothing),
(\varnothing,+\square),(\varnothing,\varnothing),(+\square,-\square)\}$
displayed in \textsf(C2).} \label{F:tableaux}
\end{figure}

A braid over $[N]$ can be represented via introducing loops $(i,i)$
and drawing arcs $(i,j)$ and $(j,\ell)$ with $i<j<\ell$ as crossing,
see Fig.\ref{F:braid} \textsf{(B1)}. A \emph{$k$-noncrossing braid}
is a braid without any $k$-crossings. We denote the set of
$k$-noncrossing braids over $[N]$ with and without isolated points
by $\mathcal{B}_k(N)$ and $\mathcal{B}^{*}_k(N)$, respectively. Chen
{\it et al.} \cite{Chen} have shown that each $k$-noncrossing
partition corresponds uniquely to a vacillating tableau of empty
shape, having at most $(k-1)$ rows, obtained via the EMs
$\{(-\square, \varnothing),(\varnothing, +\square), (\varnothing,
\varnothing),(-\square, +\square)\}$, see Fig.\ref{F:braid}
\textsf{(A2)}. In \cite{Reidys:08tan}, Chen {\it et al.} proceed by
proving that vacillating tableaux of empty shape, having at most
$(k-1)$-rows which are obtained by the EMs $\{(-\square,
\varnothing),(\varnothing, +\square),(\varnothing, \varnothing),
(+\square, -\square)\}$ correspond uniquely to $k$-noncrossing
\emph{braids}.
\begin{figure}[ht]
\centerline{%
\epsfig{file=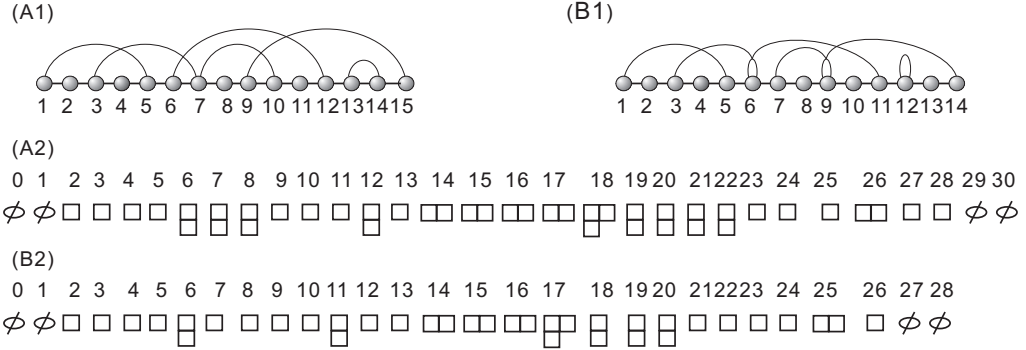,width=0.9\textwidth}\hskip15pt
 }
\caption{\small Vacillating tableaux, partitions and braids: in
\textsf{(A1)} we show a $3$-noncrossing partition and in
\textsf{(A2)} the associated vacillating tableau. \textsf{(B1)}
shows a $3$-noncrossing braid and  \textsf{(B2)} its vacillating
tableau.} \label{F:braid}
\end{figure}

In the following, we consider $k$-noncrossing partition or a
$2$-regular, $k$-noncrossing partition of $N$ vertices, where $N
\geq 1$. To this end, we introduce two $\mathbb{Z}_{k-1}$ domains
$Q_{k-1}=\{\nu=(\nu_1,\nu_2,\dots,\nu_{k-1})\in\mathbb{Z}_{k-1}\mid
\nu_1, \nu_2,\dots,\nu_{k-1}\geq 0\}$ and $W_{k-1}=
\{\nu=(\nu_1,\nu_2,\dots,\nu_{k-1})\in\mathbb{Z}_{k-1}\mid
\nu_1>\nu_2>\dots>\nu_{k-1}\geq 0\}$. Set $D_{k-1}\in
\{Q_{k-1},W_{k-1}\}$, $\mathbf{e_i}$ be the unit of $i$-th axis,
where $i\in \{1,2,\dots, k-1\}$ and $\mathbf{e_{0}}=\mathbf{0}$ be
the origin. A \emph{$\mathcal{P}_{D_{k-1}}$-walk} is a lattice walk
in $D_{k-1}$ starting at $\epsilon=(k-2,,k-1,\dots,0)$ having steps
$\pm \mathbf{e_{i}}$, such that even steps are $+\mathbf{e_{i}}$ and
odd steps are either $-\mathbf{e_{i}}$, where
$i\in\{0,1,\dots,k-1\}$.
Analogously, a \emph{$\mathcal{B}_{D_{k-1}}$-walk} is a lattice walk
in $D_{k-1}$ starting at $\epsilon$ whose even steps are either
$-\mathbf{e_{i}}$ and whose odd steps are $+\mathbf{e_{i}}$, where
$i\in\{0,1,\dots,k-1\}$. By abuse of language, we will omit the
subscript $D_{k-1}$.

Interpreting the number of squares in the rows of the shapes as
coordinates of lattice points, we immediately obtain
\begin{theorem}\label{T:bijection}\cite{Reidys:08tan}\\
{\rm (1)} The number of $k$-noncrossing partitions over $[N]$ equals
the number of $\mathcal{P}_{W_{k-1}}$-walks from $\epsilon$ to
itself of length $2N$. \\
{\rm (2)} The number of $k$-noncrossing braids over $[N]$ equals the
number of $\mathcal{B}_{W_{k-1}}$-walks from $\epsilon$ to itself of
length $2N$.
\end{theorem}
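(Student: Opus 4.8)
The plan is to establish a weight-preserving bijection between the combinatorial objects ($k$-noncrossing partitions, resp. braids, over $[N]$) and the lattice walks, by routing everything through the vacillating-tableaux description recalled above. Recall that Chen {\it et al.} have already shown that $k$-noncrossing partitions over $[N]$ are in bijection with vacillating tableaux of empty shape, length $2N$, having at most $k-1$ rows, obtained via the elementary moves $\{(-\square,\varnothing),(\varnothing,+\square),(\varnothing,\varnothing),(-\square,+\square)\}$; and similarly $k$-noncrossing braids correspond to such tableaux obtained via $\{(-\square,\varnothing),(\varnothing,+\square),(\varnothing,\varnothing),(+\square,-\square)\}$. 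So it remains only to translate ``sequence of shapes with $\le k-1$ rows'' into ``lattice walk in $W_{k-1}$'' in a way that matches the elementary moves to the allowed steps.

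The key step is the encoding map. Given a shape (Young diagram) $\lambda=(\lambda_1\ge\lambda_2\ge\cdots\ge\lambda_{k-1}\ge 0)$ with at most $k-1$ rows, I would send it to the lattice point $\phi(\lambda)=(\lambda_1+k-2,\ \lambda_2+k-3,\ \ldots,\ \lambda_{k-1}+0)\in\mathbb{Z}_{k-1}$, i.e. add the staircase $(k-2,k-3,\dots,1,0)$ coordinatewise. Since $\lambda$ has weakly decreasing parts, $\phi(\lambda)$ has strictly decreasing nonnegative coordinates, hence lies in $W_{k-1}$; conversely any point of $W_{k-1}$ subtracts back to a partition with at most $k-1$ rows, so $\phi$ is a bijection between shapes with $\le k-1$ rows and $W_{k-1}$. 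The empty shape $\varnothing$ maps to $\epsilon=(k-2,k-3,\dots,0)$. Under $\phi$, adding a square in row $j$ becomes the step $+\mathbf{e_j}$, removing a square in row $j$ becomes $-\mathbf{e_j}$, and doing nothing becomes $\pm\mathbf{e_0}=\mathbf{0}$. Therefore a vacillating tableau with the partition-type EMs $\{(-\square,\varnothing),(\varnothing,+\square),(\varnothing,\varnothing),(-\square,+\square)\}$ maps precisely to a walk in which every odd step is some $-\mathbf{e_i}$ and every even step is some $+\mathbf{e_i}$ with $i\in\{0,1,\dots,k-1\}$ — that is, a $\mathcal{P}_{W_{k-1}}$-walk — and the braid-type EMs $\{(-\square,\varnothing),(\varnothing,+\square),(\varnothing,\varnothing),(+\square,-\square)\}$ map to $\mathcal{B}_{W_{k-1}}$-walks. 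The condition $\lambda^0=\lambda^{2N}=\varnothing$ becomes the requirement that the walk runs from $\epsilon$ to $\epsilon$, and length $2N$ is preserved.

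Composing the Chen {\it et al.} bijections with $\phi$ then yields (1) and (2). The one genuine thing to check carefully — and the place where the argument could go wrong if the encoding were chosen carelessly — is that $\phi$ maps {\it admissible} shape sequences exactly onto {\it lattice walks that stay inside $W_{k-1}$}: one must verify that a move on shapes is legal (the result is still a Young diagram with $\le k-1$ rows) if and only if the corresponding lattice step keeps the walk in $W_{k-1}$. This is where the staircase shift does its work: the strict inequalities defining $W_{k-1}$ are exactly the ``$\lambda_j\ge\lambda_{j+1}$'' constraints after the shift, so an illegal shape move (producing $\lambda_j<\lambda_{j+1}$, or a negative part, or more than $k-1$ rows) corresponds precisely to a step leaving $W_{k-1}$, and vice versa. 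Once this equivalence is in hand the theorem follows immediately, since everything else — empty initial and terminal shape, length, and the step sets — is a direct dictionary translation. I expect no serious obstacle beyond bookkeeping; the substance of the result already resides in the tableaux bijections of \cite{Chen,Reidys:08tan}, and the present statement is their reformulation in lattice-walk language.
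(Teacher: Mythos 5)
Your proposal is correct and follows essentially the same route as the paper: the paper states Theorem~\ref{T:bijection} as an immediate consequence of the tableau bijections of \cite{Chen,Reidys:08tan}, obtained by ``interpreting the number of squares in the rows of the shapes as coordinates of lattice points,'' which is exactly your staircase-shift map $\phi$. Your additional verification that legal shape moves correspond precisely to steps remaining in $W_{k-1}$ is the right detail to supply and is consistent with what the paper leaves implicit.
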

Recall $\delta=(k-2,,k-1,\dots,0)$, Bousquet-M\'{e}lou and Xin
\cite{Xin:06} according to the reflection principle, arrive at the
following proposition:
\begin{proposition}\label{P:reflect}
For any ending points $\mu\in W_{k-1}$, set $\omega^{\mu}_{k,\ell}$
and $a^{\mu}_{k,\ell}$ denote the number of $W_{k-1}$-walks and
$Q_{k-1}$ going from $\delta$ to $\mu$ of length $\ell$,
respectively. Then we have
\begin{equation}
\omega^{\mu}_{k,\ell}=\sum_{\pi\in \mathbb{S}_{k-1}}{\sf
sgn}(\pi)a^{\pi(\mu)}_{k,\ell},
\end{equation}
where $\mathbb{S}_{k-1}$ is the permutation group of $[k-1]$, ${\sf
sgn}(\pi)$ is the sign of $\pi$ and
\begin{equation}
\pi(\mu)=\pi(\mu_1,\mu_2,\dots,\mu_{k-1})=(\mu_{\pi(1)},\mu_{\pi(2)},\dots,
\mu_{\pi(k-1)}).
\end{equation}
\end{proposition}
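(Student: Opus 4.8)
The plan is to invoke the Lindström–Gessel–Viennot / reflection-principle machinery for lattice walks confined to the Weyl chamber $W_{k-1} = \{\nu_1 > \nu_2 > \dots > \nu_{k-1} \ge 0\}$, which is precisely the interior of the dominant Weyl chamber of the root system $A_{k-2}$ after the standard shift. First I would observe that $W_{k-1}$ is carved out of the orthant-like region relevant here by the hyperplanes $H_{ij} = \{\nu_i = \nu_j\}$ for $i < j$, whose reflections generate the symmetric group $\mathbb{S}_{k-1}$ acting by permuting coordinates. The starting point is chosen to be $\delta = (k-2, k-1, \dots, 0)$ — note the shift ensures $\delta$ lies strictly inside $W_{k-1}$ and that permuting its coordinates produces one representative in each chamber — and the step set $\pm \mathbf{e_i}$ (together with the null step $\mathbf{e_0} = \mathbf{0}$) is invariant under the $\mathbb{S}_{k-1}$-action, which is the key structural hypothesis making the reflection argument go through.

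The core of the argument is a sign-reversing involution on the set of $Q_{k-1}$-walks (walks staying in the nonnegative region but possibly touching or crossing the interior walls) that touch at least one wall $H_{ij}$. Given such a walk from $\delta$ to $\pi(\mu)$, locate the first time $t$ at which it lies on some wall, pick a canonical such wall (say the one $H_{ij}$ with smallest index pair in lexicographic order among those containing the walk at time $t$), and reflect the entire portion of the walk after time $t$ across $H_{ij}$. Because the step set and the null step are symmetric under the reflection $s_{ij}$, the reflected object is again a valid $Q_{k-1}$-walk; it now terminates at $s_{ij}(\pi(\mu)) = (s_{ij}\pi)(\mu)$, and since $\mathrm{sgn}(s_{ij}\pi) = -\mathrm{sgn}(\pi)$ this pairing cancels in the signed sum $\sum_{\pi} \mathrm{sgn}(\pi)\, a^{\pi(\mu)}_{k,\ell}$. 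One must check that this map is an involution — that the first touching time and the canonical wall are unchanged by the operation, which holds because reflection fixes the walk up to and including time $t$ and fixes the set of walls containing the position at time $t$. The walks that survive the cancellation are exactly those never touching any wall, i.e. the genuine $W_{k-1}$-walks, and for these only $\pi = \mathrm{id}$ contributes (a $W_{k-1}$-walk from $\delta$ ends in $W_{k-1}$, and $\pi(\mu) \in W_{k-1}$ forces $\pi = \mathrm{id}$ since $\mu$ has strictly decreasing coordinates), yielding $\omega^{\mu}_{k,\ell}$.

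The step I expect to be the main obstacle is the careful bookkeeping needed when the walk sits on an intersection of several walls simultaneously, or passes through the non-generic locus near the boundary $\nu_{k-1} = 0$; one needs the canonical-wall choice to be genuinely well-defined and preserved by the involution, and one should confirm that the $+\mathbf{e_0}$ (do-nothing) step does not create parity or reachability pathologies. In practice this is handled exactly as in Bousquet-Mélou–Xin \cite{Xin:06} and Grabiner's treatment \cite{Grabiner} of oscillating walks in Weyl chambers, so I would cite that framework for the technical verification rather than reproduce it, and simply note that the shifted starting point $\delta$ and the coordinate-permutation symmetry of the steps are precisely what is needed to apply it verbatim.
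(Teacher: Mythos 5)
Your proposal is correct: it is the standard Gessel--Zeilberger reflection involution (first wall-touching time, canonical wall, reflect the tail, sign reversal via $\mathrm{sgn}(s_{ij}\pi)=-\mathrm{sgn}(\pi)$), and the key hypotheses you isolate --- the $\mathbb{S}_{k-1}$-invariance of the step set including the null step $\mathbf{e_0}$, the fact that swapping coordinates preserves $Q_{k-1}$, and that $\pi(\mu)\in W_{k-1}$ forces $\pi=\mathrm{id}$ --- are exactly what is needed. The paper itself supplies no proof of this proposition; it is imported verbatim from Bousquet-M\'elou and Xin, so your argument coincides with the approach of the cited source rather than with anything in the paper, and deferring the boundary bookkeeping to that reference is reasonable.
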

\section{random $k$-noncrossing partitions}\label{S:pk}
In this section, we establish an algorithm to uniformly generate a
$k$-noncrossing partition of $[N]$ employing a Markov-process as an
interpretation of a vacillating tableau.
\NEW{
\begin{lemma}\label{L:pk}
{\rm (a)} Suppose  $1\leq \ell\leq N-1$ and
$\nu=(\nu_1,\nu_2,\dots,\nu_{k-1})\in Q_{k-1}$, then set
$a^{\nu}_{k,2\ell+1}=f^{\nu}_{k,{\ell}}$, accordingly we have
\begin{equation}\label{E:lk0}
a^{\nu}_{k,s}=
\begin{cases}
f^{\nu}_{k,\ell}& \text{\rm for $s=2\ell+1$},\\
\sum_{i=0}^{k-1}f^{\nu-\mathbf{e_{i}}}_{k, {\ell}} &\text{\rm for $s=2\ell+2$}.\\
\end{cases}
\end{equation}
where $f^{\nu}_{k,\ell}=0$ for $\sum_{i}\nu_{i}\geq
\ell+\frac{(k-1)(k-2)}{2}$ and for $\ell=0$,
\begin{equation*}
f^{\nu}_{k,0}=
\begin{cases}
1 & \textrm{for } \nu=\lambda; \\
0 & \textrm{otherwise.}
\end{cases}
\end{equation*}
{\rm (b)} $f^{\nu}_{k,\ell}$ satisfies the recursion
\begin{equation}\label{E:rk}
f^{\nu}_{k,\ell}=
\sum_{i=0}^{k-1}\sum_{i=0}^{k-1}f^{\nu+\mathbf{e_{i}}-
\mathbf{e_{j}}}_{k,\ell-1}.
\end{equation}
\end{lemma}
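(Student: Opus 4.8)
The plan is to read off both identities by classifying a walk according to its last step, resp.\ its last elementary move, and to get the base case and the vanishing of $f^{\nu}_{k,\ell}$ by elementary counting. By the definition $a^{\nu}_{k,2\ell+1}=f^{\nu}_{k,\ell}$, the number $f^{\nu}_{k,\ell}$ is that of $\mathcal{P}_{Q_{k-1}}$-walks from $\epsilon$ to $\nu$ of length $2\ell+1$; I adopt the convention $a^{\mu}_{k,s}=f^{\mu}_{k,\ell}=0$ whenever $\mu\notin Q_{k-1}$. The only structure used is: a step landing in an even position is $+\mathbf{e_i}$ for some $i\in\{0,\dots,k-1\}$, a step landing in an odd position is $-\mathbf{e_i}$ for some $i\in\{0,\dots,k-1\}$ (with $\mathbf{e_0}=\mathbf{0}$ encoding ``do nothing''), and the sole global constraint is that every visited point lies in $Q_{k-1}$. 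In particular, because $Q_{k-1}$ is the nonnegative orthant, \emph{appending} a $+\mathbf{e_i}$ step to a walk ending in $Q_{k-1}$ is always admissible.

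For part (a) in the case $s=2\ell+2$ of \eqref{E:lk0}: the last step of a walk counted by $a^{\nu}_{k,2\ell+2}$ lands in an even position, hence equals $+\mathbf{e_i}$ for a unique $i$; deleting it gives a $\mathcal{P}_{Q_{k-1}}$-walk from $\epsilon$ to $\nu-\mathbf{e_i}$ of length $2\ell+1$, and conversely appending $+\mathbf{e_i}$ to any such walk is admissible since the new endpoint $\nu$ lies in $Q_{k-1}$ by hypothesis. For each $i$ this is a bijection, so summation yields $a^{\nu}_{k,2\ell+2}=\sum_{i=0}^{k-1}a^{\nu-\mathbf{e_i}}_{k,2\ell+1}=\sum_{i=0}^{k-1}f^{\nu-\mathbf{e_i}}_{k,\ell}$, the terms with $\nu-\mathbf{e_i}\notin Q_{k-1}$ vanishing since no such walk exists. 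The base case $\ell=0$ is the finite enumeration of the length-one walks out of $\epsilon$, each a single $-\mathbf{e_i}$ step. The vanishing comes from a coordinate-sum estimate: of the $2\ell+1$ steps, the $\ell$ landing in even positions each raise $\sum_j(\cdot)_j$ by at most $1$ while the $\ell+1$ landing in odd positions never raise it, so from $\sum_j\epsilon_j=\tfrac{(k-1)(k-2)}{2}$ one gets $\sum_j\nu_j\le \ell+\tfrac{(k-1)(k-2)}{2}$, whence $f^{\nu}_{k,\ell}$ vanishes once $\sum_j\nu_j$ exceeds that bound.

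For part (b) I would derive \eqref{E:rk} from part (a) by one more step of the same analysis. For $\ell\ge 1$ and $\nu\in Q_{k-1}$, a walk counted by $f^{\nu}_{k,\ell}=a^{\nu}_{k,2\ell+1}$ is a walk of length $2\ell$ followed by a $-\mathbf{e_j}$ step landing in position $2\ell+1$; its length-$2\ell$ prefix ends at $\nu+\mathbf{e_j}\in Q_{k-1}$, and conversely every walk of length $2\ell$ ending at $\nu+\mathbf{e_j}$ extends (legally, since $\nu\in Q_{k-1}$) by $-\mathbf{e_j}$. Hence $f^{\nu}_{k,\ell}=\sum_{j=0}^{k-1}a^{\nu+\mathbf{e_j}}_{k,2\ell}$, and applying the $s=2\ell+2$ case of part (a) to $a^{\mu}_{k,2\ell}=a^{\mu}_{k,2(\ell-1)+2}$ with $\mu=\nu+\mathbf{e_j}$ and relabelling the summation indices produces \eqref{E:rk}. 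Alternatively one argues \eqref{E:rk} directly by deleting the last elementary move $(+\mathbf{e_i},-\mathbf{e_j})$: the intermediate point $\nu+\mathbf{e_j}$ lies in $Q_{k-1}$ by the same monotonicity, so reattaching such a pair to a walk ending at $\nu+\mathbf{e_j}-\mathbf{e_i}$ is always admissible.

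The one point needing care, and essentially the only one, is the bijectivity of the step-deletion maps: one must know that attaching a $+\mathbf{e_i}$ step, or a $(+\mathbf{e_i},-\mathbf{e_j})$ pair, can never push a walk out of the admissible region. This is immediate because that region is the nonnegative orthant $Q_{k-1}$, on which adding a unit vector is domain-preserving, and because the prescribed endpoint $\nu$ is assumed to lie in $Q_{k-1}$ (this hypothesis is exactly what makes the recursions exact rather than mere upper bounds); walks whose endpoint falls outside $Q_{k-1}$ contribute $0$ on both sides by convention, and the ``do nothing'' moves need no separate treatment, being precisely the $i=0$ (or $j=0$) summands under $\mathbf{e_0}=\mathbf{0}$.
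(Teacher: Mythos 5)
Your proof is correct and takes essentially the same route as the paper's: the paper dispatches (a) as ``obvious by construction'' and proves (b) by exactly your last-elementary-move decomposition (stripping the final $+\mathbf{e_i}$, $-\mathbf{e_j}$ pair), so you have simply supplied the bijectivity and admissibility details the paper omits. The one discrepancy is the vanishing condition: your coordinate-sum estimate yields $\sum_i\nu_i\le \ell+\tfrac{(k-1)(k-2)}{2}$ and hence vanishing only for \emph{strict} inequality, whereas the lemma asserts vanishing already at equality --- but equality is attained by the walk whose odd steps all ``do nothing'', so this is an off-by-one in the lemma's statement rather than a gap in your argument.
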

}
\begin{proof}
Assertion {\rm (a)} is obvious by construction. To prove assertion
{\rm (b)}, indeed by definition of $f^{\nu}_{k,\ell}$, we have
$a^{\nu}_{k,{2\ell+1}}= f^{\nu}_{k,{\ell}}$. Furthermore, since each
lattice walk ending at $\nu$ of length $2\ell+1$ can be obtained
from a lattice walk ending at
$(\nu-\mathbf{e_{i}}+\mathbf{e_{j}})\in W_{k-1}$ of length $2\ell-1$
via first $+\mathbf{e_{i}}$ then $-\mathbf{e_{j}}$. I.e.~we obtain
eq.~\ref{E:rk} and the proof of the lemma is complete.
\end{proof}

In the following, we consider a vacillating tableau as the sampling
path of a Markov-process, whose transition probabilities can be
calculated via the terms $\omega^{\nu}_{k,\ell}$.

\begin{theorem}\label{T:algpk}
A random $k$-noncrossing partition can be generated, in $O(N^k)$
pre-processing time and $O(N^{k})$ space complexity, with uniform
probability in linear time. Each $k$-noncrossing partition is
generated with $O(N)$ space and time complexity, see Algorithm 1.
\end{theorem}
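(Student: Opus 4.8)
The plan is to exhibit Algorithm 1 explicitly and then verify the three claims attached to it: correctness (the output is a uniformly random element of $\mathcal{P}_k(N)$), the pre-processing cost $O(N^k)$ in time and space, and the $O(N)$ per-sample cost. By Theorem~\ref{T:bijection}(1), generating a uniformly random $k$-noncrossing partition of $[N]$ is the same as generating a uniformly random $\mathcal{P}_{W_{k-1}}$-walk of length $2N$ from $\epsilon$ to itself; so the whole problem reduces to sampling such a walk uniformly and then applying the (bijective, linear-time) reconstruction of the partition from the vacillating tableau of Chen {\it et al.}~\cite{Chen}. The standard device for uniform generation of a lattice path with fixed endpoints is to run the path forward step by step as a (time-inhomogeneous) Markov chain whose transition probability at each step is the ratio (number of valid completions from the candidate next state) over (number of valid completions from the current state). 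Here the ``number of valid completions'' is exactly a count of $W_{k-1}$-walks from a given point to $\epsilon$ of the remaining length, i.e.\ a value $\omega^{\cdot}_{k,\cdot}$, which by Proposition~\ref{P:reflect} is a signed sum of the $Q_{k-1}$-counts $a^{\cdot}_{k,\cdot}$, and the latter are computed by the recursion of Lemma~\ref{L:pk}.

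Concretely I would carry out the following steps. First, pre-compute the table $f^{\nu}_{k,\ell}$ for all $\nu\in Q_{k-1}$ with $\sum_i\nu_i < \ell + \binom{k-1}{2}$ and all $0\le \ell\le N$, using the base case and recursion~\eqref{E:rk} of Lemma~\ref{L:pk}; from these obtain $a^{\nu}_{k,s}$ for all relevant $\nu$ and $s$ via~\eqref{E:lk0}, and then $\omega^{\mu}_{k,\ell}$ for all $\mu\in W_{k-1}$ and all $\ell$ of the correct parity via the reflection formula of Proposition~\ref{P:reflect} (a sum over the fixed, $k$-independent-of-$N$ group $\mathbb{S}_{k-1}$). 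Second, describe the sampler: start the walk at $\epsilon$; having produced a partial walk of length $t$ ending at a point $\mu$, with $r = 2N - t$ steps remaining, enumerate the (at most $k$) legal next steps $\pm\mathbf{e}_i$ keeping the walk in $W_{k-1}$ and respecting the odd/even sign constraint of a $\mathcal{P}_{W_{k-1}}$-walk, and move to candidate $\mu'$ with probability $\omega^{\mu'}_{k,r-1}/\omega^{\mu}_{k,r}$. Third, once the length-$2N$ walk is complete, translate it back through the Chen--Chen--Reidys bijection into the canonical arc diagram of a $k$-noncrossing partition of $[N]$.

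For correctness I would argue by the usual telescoping/induction on the length of the walk: the probability of producing any particular complete walk $w$ equals $\prod_{t}\big(\omega^{\mu_{t+1}}_{k,\,r_{t}-1}/\omega^{\mu_{t}}_{k,\,r_{t}}\big)$, which telescopes to $1/\omega^{\epsilon}_{k,2N}$, independent of $w$; hence the walk — and via the bijection, the partition — is uniform, provided every partial walk with positive probability can be completed, which is exactly the statement that each transition probability in the product has a strictly positive denominator, guaranteed because we only ever move to states with $\omega^{\mu'}_{k,r-1}>0$. For the complexity bookkeeping: the $f$-table has $O(N^{k-1})$ spatial indices (the coordinates $\nu_1,\dots,\nu_{k-1}$ range over $O(N)$ values each, subject to the cutoff) times $O(N)$ values of $\ell$, giving $O(N^{k})$ entries, each filled in $O(1)$ amortized time from~\eqref{E:rk}; deriving $a$ and then $\omega$ costs another $O(N^k)$ (the $\mathbb{S}_{k-1}$-sum is a constant factor). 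Each sample visits $2N$ positions, does $O(1)$ work per position (at most $k$ candidate steps, each evaluated via a constant-size reflection sum over precomputed $a$-values), and the final decoding of the tableau into arcs is linear, so generation is $O(N)$ time and $O(N)$ space. The main obstacle I expect is the bookkeeping around the reflection formula: one must be careful that the relevant $\omega^{\mu}_{k,\ell}$ are genuinely nonnegative counts at every reachable intermediate state (even though Proposition~\ref{P:reflect} expresses them as a {\it signed} sum), that the odd/even sign parity and the coordinate cutoff $\sum_i\nu_i<\ell+\binom{k-1}{2}$ are threaded correctly through the indices at each step, and that the truncation of the $f$-table does not discard any entry that the recursion~\eqref{E:rk} actually needs — verifying these invariants is where the real care lies, while the probabilistic and complexity arguments are routine.
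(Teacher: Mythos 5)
Your proposal is correct and follows essentially the same route as the paper: both interpret the vacillating tableau (equivalently, the $\mathcal{P}_{W_{k-1}}$-walk) as the sampling path of a Markov process whose transition probabilities are ratios of completion counts $\omega^{\nu}_{k,\ell}$, obtain uniformity by the telescoping product $1/{\sf V}_k(\varnothing,2N)$, and get the $O(N^k)$ pre-processing bound by tabulating the $f^{\nu}_{k,\ell}$ via the recursion of Lemma~\ref{L:pk} and the reflection formula of Proposition~\ref{P:reflect}. Your explicit attention to the positivity of the signed reflection sums and the index cutoffs is a point the paper's proof passes over silently, but it does not change the argument.
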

\begin{algorithm}\label{A:alg1}
\begin{algorithmic}[1]
\STATE{$i$=1}

\STATE{\it Tableaux} (Initialize the sequence of shapes,
$\{\lambda^{i}\}_{i=0}^{i=2N}$)

\STATE {$\lambda^{0}=\varnothing$, $\lambda^{2N}=\varnothing$}

\WHILE {$i < 2N$ }

\IF {$i$ is even} \STATE  X[0]$\leftarrow$ ${\sf
V_{k}}(\lambda^{i+1}_{\varnothing}, 2N-(i+1))$

\FOR {$1\leq j\leq k-1$}

\STATE  X[j]$\leftarrow$ ${\sf V_{k}}(\lambda^{i+1}_{-\square_{j}},
2N-(i+1))$

\ENDFOR

\ENDIF

\IF {$i$ is odd}

\STATE  X[0]$\leftarrow$ ${\sf V_{k}}(\lambda^{i+1}_{\varnothing},
2N-(i+1))$

\FOR {$1\leq j\leq k-1$}

\STATE  X[j]$\leftarrow$ ${\sf V_{k}}(\lambda^{i+1}_{+\square_{j}},
2N-(i+1))$

\ENDFOR

\ENDIF

\STATE {\it sum} $\leftarrow$ ($\sum_{j=1}^{j=k-1}$X[j])+X[0]

\STATE {\it Shape} $\leftarrow$ {\sf Random}({\it sum}) ({\sf
Random} generates the random shape $\lambda^{i+1}_{+\square_{j}(or
-\square_{j})}$ with probability ${X[j]}/{\it sum}$ or
$\lambda^{i+1}_{\varnothing}$ with probability $X[0]/sum$)

\STATE $i \leftarrow i+1$

\STATE Insert {\it Shape} into {\it Tableau} (the sequence of
shapes).

\ENDWHILE

\STATE  {\sf Map}({\it Tableau}) (maps {\it Tableau} into its
corresponding $k$-noncrossing partition)
\end{algorithmic}
\caption{\small Uniform generation of $k$-noncrossing partitions}
\end{algorithm}
\begin{proof}
The main idea is to interpret tableaux of $k$-noncrossing partitions
as sampling paths of a stochastic process, see Fig.\ref{F:rest} for
the case $k=3$. We label the $(i+1)$-th shape,
$\lambda^{i+1}_{\alpha}$ by $\alpha=\lambda^{i+1}\setminus
\lambda^{i}$ and $\alpha\in \{+\square_{j}, -\square_{j},
\varnothing\}_{j=1}^{j=k-1}$, specifying the particular transition
from $\lambda^i$ to
$\lambda^{i+1}$.\\
Let ${\sf V}_{k}(\lambda^{i+1}_{\alpha},2N-(i+1))$ denote the number
of vacillating tableaux of length $(i+1)$ such that
$\lambda^{i+1}\setminus \lambda^{i}=\alpha$. We remark here that if
$\lambda^{i+1}$ has $(\nu_{i}-k-i+3)$ boxes in the $i$-th row, then
${\sf V}_{k}(\lambda^{i+1}_{\alpha},2N-(i+1))=
\omega^{\nu}_{k,2N-(i+1)}$, where
$\nu=(\nu_1,\nu_2,\dots,\nu_{k-1})$.
\begin{figure}[ht]
\centerline{\includegraphics[width=0.6
\textwidth]{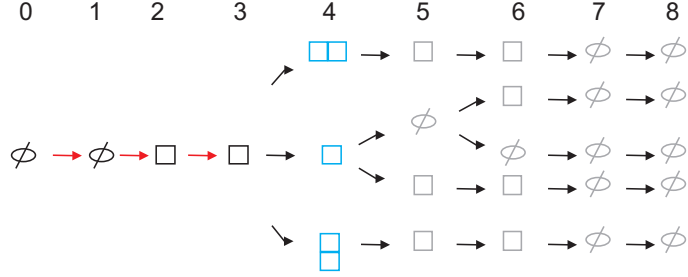}} \caption{\small Sampling paths: we
display all tableaux sequences where $\lambda^1=\varnothing$,
$\lambda^2=\lambda^3=\square$. We display the three possible
$\lambda^4$-shapes (blue), induced by $\varnothing,+
\square_1,+\square_2$ and all possible continuations (grey).}
\label{F:rest}
\end{figure}
Let $(X^i)_{i=0}^{2N}$ be given as follows:\\
$\bullet$ $X^0=X^{2N}=\varnothing$ and $X^i$ is a shape having at
most $k-1$ rows;\\
$\bullet$ for $1\le i\le N-1$, we have $X_{2i+1}\setminus X_{2i}\in
\{\varnothing, -\square_{j}\}_{j=1}^{j=k-1}$ and $X_{2i+2}\setminus
X_{2i+1}\in
\{\varnothing, +\square_{j}\}_{j=1}^{j=k-1}$.\\
$\bullet$ for $1\leq i\leq 2N-1$, we have
\begin{equation}\label{E:w0}
\mathbb{P}(X^{i+1}=\lambda^{i+1}\mid X^i=\lambda^i)= \frac{{\sf
V}(\lambda^{i+1}_{\alpha},2N-i-1)}{{\sf V_{k}}(\lambda^{i},2N-i)}.
\end{equation}
In view of eq.~(\ref{E:w0}), we immediately observe
\begin{equation}
\prod_{i=0}^{2N-1}\mathbb{P}(X^{i+1}=\lambda^{i+1}\mid
X^i=\lambda^i)= \frac{{\sf V_{k}}(\lambda^{2N}=\varnothing,0)} {{\sf
V_{k}}(\lambda^{0}=\varnothing,N)} =\frac{1}{{\sf
V_{k}}(\varnothing,2N)}.
\end{equation}
Consequently, the process $(X^i)_{i=0}^{2N}$ generates random
$k$-noncrossing partitions with uniform probability in $O(N)$ time and space.\\
As for the derivation of the transition probabilities, suppose we
are given a shape $\lambda^{h}$ having exactly $(\nu_{i}-k-i+3)$
boxes in the $i$-th row, where $i\in\{1,2,\dots,k-1\}$. According to
Lemma~\ref{L:pk}, set $\nu=(\nu_1,\nu_2,\dots,\nu_{k-1})$, then we
reduce the problem obtain $\omega^{\nu}_{\ell}$ to the calculation
of several coefficients $f^{\beta}_{\ell}$ for some fixed $\beta\in
W_{k-1}$ related to $\nu$
and $1\leq \ell\leq N-1$. \\
Furthermore, according to Lemma~\ref{L:pk} {\rm (b)}, the
coefficients $f^{\beta}_{\ell}$ for all $\beta\in W_{k-1}$ and
$1\leq \ell\leq N-1$, can be computed in $O(N^k)$ time and $O(N^k)$
space complexity via $k$-nested {\bf
For}-loops.\\
Given the coefficients $f^{\beta}_{\ell}$ for all $\beta,\ell$, we
can derive the transition probabilities in $O(1)$ time. Accordingly,
we obtain all transition probabilities ${\sf
V_{k}}(\lambda^i_{\alpha},2N-i))$ in $O(N^k)$ time and $O(N^{k})$
space complexity.
\end{proof}

\section{random $2$-regular, $k$-noncrossing partitions}\label{S:p2k}
In this section, we generate random $2$-regular, $k$-noncrossing
partitions employing a bijection between the set of $2$-regular,
$k$-noncrossing partitions over $[N]$ and the set of $k$-noncrossing
braids without loops over $[N-1]$.

\begin{lemma}\label{L:bijection-2}
Set $k\in\mathbb{N}$, $k\ge 3$. Suppose
$(\lambda^i)_{i=0}^{2\ell+1}$ in which $\lambda^{0}=\varnothing$ is
a sequence of shapes such that
$\lambda^{2j}\setminus\lambda^{2j-1}\in
\{\{+\square_{h}\}_{h=1}^{h=k-1},\varnothing\}$ and
$\lambda^{2j-1}\setminus \lambda^{2j-2}\in
\{\{-\square_{h}\}_{h=1}^{h=k-1},\varnothing\}$. Then
$(\lambda^i)_{i=0}^{2\ell+1}$ induces a unique sequence of shapes
$(\mu^i)_{i=0}^{2\ell}$ with the following properties
\begin{eqnarray}
&& \label{E:right}
 \mu^{2j+1}\setminus
\mu^{2j}\in \{\{+\square_{h}\}_{h=1}^{h=k-1},\varnothing\}\
\text{\it and}\ \mu^{2j+2}\setminus\mu^{2j+1}\in
\{\{-\square_{h}\}_{h=1}
^{h=k-1},\varnothing\},\\
\label{E:shape} &&
 \mu^{2j}=\lambda^{2j+1}\\\label{E:shape1}
&&  \mu^{2j+1}\neq\lambda^{2j+2}\quad \Longrightarrow\quad
\mu^{2j+1}\in \{\lambda^{2j+1},\lambda^{2j+3}\} .
\end{eqnarray}
\end{lemma}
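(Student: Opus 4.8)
The plan is to produce the induced sequence by a single index shift and then to read the three properties off the parity pattern of the elementary moves.

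First I would set $\mu^i:=\lambda^{i+1}$ for $0\le i\le 2\ell$; equivalently, delete $\lambda^0$ from the sequence and relabel. Since $\lambda^0=\varnothing$ offers no box for deletion and $\lambda^1\setminus\lambda^0\in\{-\square_h,\varnothing\}$, we get $\lambda^1=\varnothing$, so $\mu^0=\varnothing$, and every $\mu^i$ is a shape with at most $k-1$ rows because every $\lambda^i$ is. Property (\ref{E:shape}), $\mu^{2j}=\lambda^{2j+1}$, holds by definition. For (\ref{E:right}) the key observation is that the $i$-th move $\mu^{i-1}\to\mu^i$ is the $(i+1)$-st move $\lambda^i\to\lambda^{i+1}$, and since $i$ and $i+1$ have opposite parity, the hypotheses on $(\lambda^i)$ — additions into even-indexed targets, deletions into odd-indexed targets — translate verbatim into the additions into odd targets and deletions into even targets demanded by (\ref{E:right}); concretely $\mu^{2j+1}\setminus\mu^{2j}=\lambda^{2j+2}\setminus\lambda^{2j+1}\in\{+\square_h,\varnothing\}_{h=1}^{k-1}$ and $\mu^{2j+2}\setminus\mu^{2j+1}=\lambda^{2j+3}\setminus\lambda^{2j+2}\in\{-\square_h,\varnothing\}_{h=1}^{k-1}$. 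Finally (\ref{E:shape1}) is vacuous for this sequence: by construction $\mu^{2j+1}=\lambda^{2j+2}$, so the hypothesis $\mu^{2j+1}\neq\lambda^{2j+2}$ is never met.

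The remaining, and only delicate, point is uniqueness. Any sequence $(\mu^i)$ satisfying (\ref{E:shape}) already has all its even-indexed terms fixed, $\mu^{2j}=\lambda^{2j+1}$, so for each $j$ property (\ref{E:right}) forces $\mu^{2j+1}$ to be a shape containing both $\lambda^{2j+1}$ and $\lambda^{2j+3}$ and exceeding each by at most one box. Here I would invoke the basic fact that consecutive shapes of $(\lambda^i)$ differ by at most one box, so that $\lambda^{2j+1}$ and $\lambda^{2j+3}$ both lie within one box of $\lambda^{2j+2}$. A short case analysis then shows that whenever $\lambda^{2j+1}\neq\lambda^{2j+3}$ the union $\lambda^{2j+1}\cup\lambda^{2j+3}$ already equals $\lambda^{2j+2}$ and has the maximal admissible size, so $\mu^{2j+1}=\lambda^{2j+2}$ is forced by (\ref{E:right}) alone; and when $\lambda^{2j+1}=\lambda^{2j+3}$, property (\ref{E:shape1}) restricts $\mu^{2j+1}$ to the two-element set $\{\lambda^{2j+1},\lambda^{2j+2}\}$, the residual choice being pinned down by the construction above (this is exactly the configuration that would create a loop in the associated braid). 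I expect this last case — keeping track of which row is added at position $2j+2$ and which is removed at position $2j+3$, and why the added box is exactly the one removed precisely when $\lambda^{2j+1}=\lambda^{2j+3}$ — to be the main obstacle; everything else is routine bookkeeping.
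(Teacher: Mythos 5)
Your construction is correct, and it is a genuinely more economical route than the paper's. You simply shift indices, $\mu^i:=\lambda^{i+1}$, and read (\ref{E:right}) and (\ref{E:shape}) off the parity hypotheses; the paper instead groups the moves of $(\lambda^i)$ into pairs $(x_i,y_i)=(\lambda^{2i}\setminus\lambda^{2i-1},\,\lambda^{2i+1}\setminus\lambda^{2i})$, applies a transposition map $\varphi$ that swaps the coordinates of every pair containing $\varnothing$ while fixing the pairs $(+\square_h,-\square_j)$, and takes $(\mu^i)$ to be the sequence generated by the transposed pairs, verifying (\ref{E:shape1}) by a small diagram chase. Read consistently with (\ref{E:right}), the paper's $\varphi$-construction produces exactly your shifted sequence (the transposition only reorders a move with a do-nothing step inside a pair, which does not change the intermediate shapes that matter), so the two arguments build the same bijection; yours makes invertibility (prepend $\varnothing$) and the loop/2-regularity correspondence used in Corollary~\ref{C:bi}(b) transparent, at the cost of rendering (\ref{E:shape1}) vacuous rather than substantive. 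Your remark on uniqueness is also well taken and goes beyond what the paper proves: properties (\ref{E:right})--(\ref{E:shape1}) do \emph{not} characterize $(\mu^i)$, since in the configuration $\lambda^{2j+2}=\lambda^{2j+1}+\square_h$, $\lambda^{2j+3}=\lambda^{2j+1}$ both $\mu^{2j+1}=\lambda^{2j+1}$ and $\mu^{2j+1}=\lambda^{2j+2}$ satisfy all three conditions, so ``unique'' can only mean that the construction itself is well-defined; the paper passes over this point entirely. Your case analysis showing that in all other configurations $\mu^{2j+1}$ is forced is correct. The one thing I would ask you to make explicit is the resolution of that residual case: state outright that you select $\mu^{2j+1}=\lambda^{2j+2}$ (the loop-creating choice), since that selection is what makes the induced map agree with the one the rest of the paper relies on.
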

\begin{figure}[ht]
\centerline{%
\epsfig{file=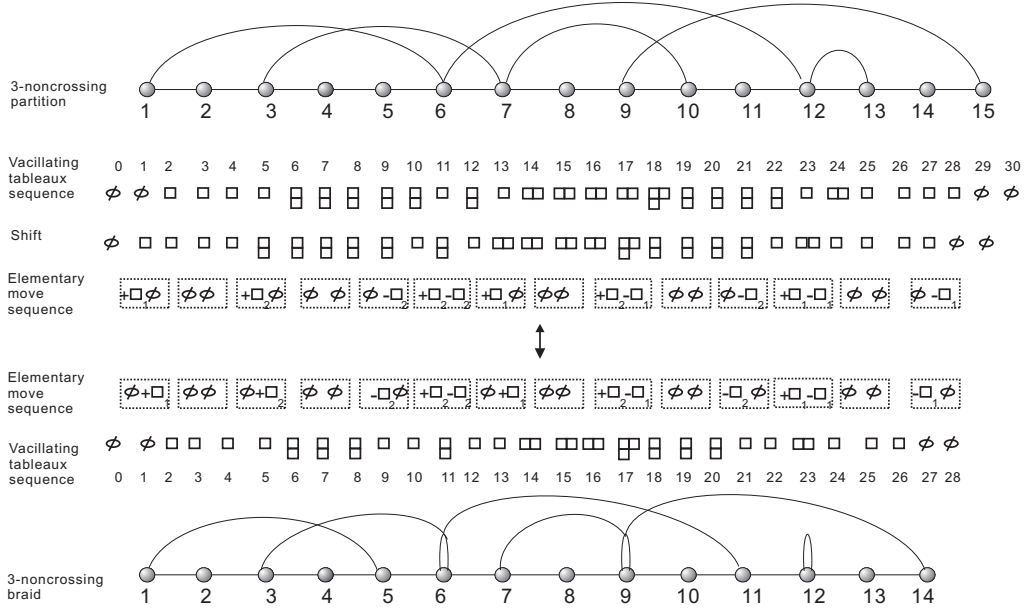,width=0.9\textwidth}\hskip15pt
 }
\caption{\small Illustration of Lemma~\ref{L:bijection-2}. We show
how to map the $\mathcal{P}_{W_2}$-walk induced by the
$3$-noncrossing partition $P$, of Fig.\ref{F:braid}, \textsf{(A)}
into the $\mathcal{B}_{W_2}$-walk. The latter corresponds to the
$3$-noncrossing braid of Fig.\ref{F:braid}, \textsf{(B)}. Here each
vertex, $i$, is aligned with the triple of shapes
$(\lambda^{2i-2},\lambda^{2i-1},\lambda^{2i})$ in the corresponding
tableaux.} \label{F:map}
\end{figure}
\begin{proof}
Since $\lambda^1=\varnothing$, $(\lambda^j)_{j=1}^{2\ell+1}$
corresponds to a sequence of pairs $((x_i,y_i))_{i=1}^{\ell}$ given
by $x_{i}=\lambda^{2i}\setminus\lambda^{2i-1}$ and
$y_{i}=\lambda^{2i+1}\setminus\lambda^{2i}$ such that
\begin{equation}\label{E:neu}
\forall\, 1\le i\le \ell;\qquad (x_i,y_i)\in
\{(\varnothing,\varnothing),\  (+\square_h,\varnothing), \
(\varnothing,-\square_h), \ (+\square_h,-\square_j)\}.
\end{equation}
Let $\varphi$ be given by
\begin{equation}\label{E:varphi2}
\varphi(({x}_i,{y}_i))=
\begin{cases}
({x}_i,{y}_i) & \ \text{\rm for } \
({x}_i,{y}_i)=(+\square_h,-\square_j) \\
({y}_i,{x}_i) & \ \text{\rm otherwise,}
\end{cases}
\end{equation}
and set $(\varphi(x_i,y_i))_{i=1}^\ell=(a_i,b_i)_{i=1}^{\ell}$. Note
that $(a_i,b_i)\in
\{(-\square_h,\varnothing),(\varnothing,+\square_h),
(\varnothing,\varnothing),(+\square_h,-\square_j)\}$, where $1\leq
h,j\leq k-1$, see Fig.~\ref{F:map}.
 Let $(\mu^i)_{i=1}^{\ell}$ be the sequence of shapes
induced by $(a_i,b_i)_{i=1}^{\ell}$ according to
$a_{i}=\mu^{2i}\setminus\mu^{2i-1}$ and
$b_{i}=\mu^{2i-1}\setminus\mu^{2i-2}$ initialized with
$\mu_{0}=\varnothing$. Now eq.~(\ref{E:right}) is implied by
eq.~(\ref{E:neu}) and eq.~(\ref{E:shape}) follows by construction.
Suppose $\mu^{2j+1}\neq\lambda^{2j+2}$ for some $0\le j\le \ell-1$.
By definition of $\varphi$, only pairs containing ``$\varnothing$''
in at least one coordinate are transposed from which we can conclude
$\mu^{2j+1}=\mu^{2j}$ or $\mu^{2j+1}=\mu^{2j+2}$, whence
eq.~(\ref{E:shape1}). I.e.~we have the following situation
\begin{equation*}
\diagram & \lambda^{2j+1} \ar@{=}[dl]\rto^{} &
\lambda^{2j+2}\ar@{-}[dl] \rto^{} &
\lambda^{2j+3} \ar@{=}[dl]  \\
 \mu^{2j} \ar@{=}[r] & \mu^{2j+1} \rto^{} &\mu^{2j+2}
\enddiagram
 \text{\rm or }
\diagram & \lambda^{2j+1} \ar@{=}[dl]\rto^{} &
\lambda^{2j+2}\ar@{-}[dl] \rto^{} &
\lambda^{2j+3} \ar@{=}[dl]  \\
 \mu^{2j} \rto^{} & \mu^{2j+1} \ar@{=}[r] &\mu^{2j+2},
\enddiagram
\end{equation*}
and the lemma follows.
\end{proof}

Lemma~\ref{L:bijection-2} establishes a bijection between
$\mathcal{P}_{W_{k-1}}$-walks of length $2\ell+1$ and
$\mathcal{B}_{W_{k-1}}$-walks of length $2\ell$, where
$W_{k-1}=\{(a_1,a_2,\dots,a_{k-1})\mid a_1>a_2>\dots>a_{k-1}\}$.
\begin{corollary}\label{C:bi}
Let $\mathcal{P}_k(N)$ and $\mathcal{B}_{k}(N-1)$ denote the set of
$k$-noncrossing partitions on $[N]$ and $k$-noncrossing braids on
$[N-1]$.
Then \\
{\rm (a)} we have a bijection
\begin{equation}\label{E:biject2}
\vartheta\colon \mathcal{P}_{k}(N)\longrightarrow
\mathcal{B}_k(N-1).
\end{equation}
{\rm (b)} $\vartheta$ induces by restriction a bijection between
$2$-regular, $k$-noncrossing partitions on $[N]$ and $k$-noncrossing
braids without loops on $[N-1]$.
\end{corollary}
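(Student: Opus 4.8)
The plan is to realize $\vartheta$ as a composite of bijections that are already available, and then to characterize both ``$2$-regular'' and ``loop-free'' as the absence of one and the same local pattern in the associated lattice walks — a pattern that is visibly preserved by the map $\varphi$ of Lemma~\ref{L:bijection-2}.

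\textbf{Part (a).} First I would recall that, by the encoding of Chen {\it et al.} together with Theorem~\ref{T:bijection}(1), a $k$-noncrossing partition $P$ of $[N]$ is the same datum as a $\mathcal{P}_{W_{k-1}}$-walk $(\lambda^i)_{i=0}^{2N}$ from $\epsilon$ to $\epsilon$. Since $\lambda^0=\varnothing$ and the first (odd) step is a removal-or-nothing, $\lambda^1=\varnothing$; since $\lambda^{2N}=\varnothing$ and the last (even) step is an addition-or-nothing, $\lambda^{2N-1}=\varnothing$, so the terminal step is forced to be trivial. Hence $P$ is equivalently the truncation $(\lambda^i)_{i=0}^{2N-1}$, an admissible input for Lemma~\ref{L:bijection-2} with $\ell=N-1$. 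Applying the Lemma returns a unique $(\mu^i)_{i=0}^{2(N-1)}$ with $\mu^{2(N-1)}=\lambda^{2N-1}=\varnothing$ by~(\ref{E:shape}), and by~(\ref{E:shape})--(\ref{E:shape1}) each $\mu^j$ equals one of the $\lambda$'s, hence has at most $k-1$ rows; thus $(\mu^i)$ is a $\mathcal{B}_{W_{k-1}}$-walk from $\epsilon$ to $\epsilon$ of length $2(N-1)$, which by Theorem~\ref{T:bijection}(2) encodes a $k$-noncrossing braid on $[N-1]$. Each arrow in this chain is a bijection — Lemma~\ref{L:bijection-2}'s construction is visibly invertible (and $\varphi$ is an involution on the relevant pairs), and appending/deleting a forced trivial step is a bijection — so composing gives the desired $\vartheta\colon\mathcal{P}_k(N)\to\mathcal{B}_k(N-1)$.

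\textbf{Part (b).} Here the point is a local reading of the two encodings. Processing the vertices of $P$ from right to left, vertex $v$ of $P$ occupies the two consecutive steps $2(N-v)+1,\,2(N-v)+2$ of the walk: the odd step removes a box exactly when $v$ is the smaller endpoint of an arc (it ``closes'' that arc) and the even step adds a box exactly when $v$ is the larger endpoint of an arc (it ``opens'' one). I would then check that $P$ contains an arc $(i,i+1)$ if and only if the box opened at the even step of vertex $i{+}1$, say in row $h$, is removed again at the immediately following odd step of vertex $i$ — which it must be, and can only be, since those two steps are consecutive with nothing between them. In the notation of the proof of Lemma~\ref{L:bijection-2} this says precisely that the pair $(x_{N-i},y_{N-i})$ equals $(+\square_h,-\square_h)$; conversely a pair of this form forces the arc opened at the first of its two straddled (necessarily consecutive) vertices to be closed at the second. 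Hence \emph{$P$ is $2$-regular iff no pair $(x_i,y_i)$ of its $\mathcal{P}_{W_{k-1}}$-walk has the form $(+\square_h,-\square_h)$}. An entirely analogous local analysis of the braid encoding identifies loop-freeness of a braid with the absence, in its $\mathcal{B}_{W_{k-1}}$-walk, of the corresponding ``cancelling'' pattern, namely a box added and immediately deleted in the same row at a single braid vertex. Finally, $\varphi$ fixes every pair of the form $(+\square_h,-\square_j)$, in particular every cancelling pair $(+\square_h,-\square_h)$, and carries every non-cancelling pair to a non-cancelling one; hence the $\mathcal{P}_{W_{k-1}}$-walk of $P$ contains a cancelling pair iff the $\mathcal{B}_{W_{k-1}}$-walk of $\vartheta(P)$ does. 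Therefore $P$ is $2$-regular iff $\vartheta(P)$ is loop-free, and restricting the bijection of part (a) yields the asserted bijection.

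\textbf{Where the work is.} Part (a) is bookkeeping once the length discrepancy $2N\leftrightarrow 2N-1\leftrightarrow 2(N-1)$ is absorbed by the forced-trivial-step observation, and the concluding step of part (b) (that $\varphi$ respects cancelling pairs) is immediate from the definition of $\varphi$. The real content is the local dictionary: one must extract from the explicit RSK/jeu-de-taquin descriptions of the Chen {\it et al.} maps that adjacent vertices occupy consecutive tableau steps with nothing between them — so that a span-one arc, resp.\ a loop, is forced to manifest as the immediate cancellation of one and the same box, and that no other configuration produces such a pair — while keeping straight the parity/index bookkeeping relating the $\mathcal{B}_{W_{k-1}}$-walk convention to the braid elementary moves. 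This is the step I expect to be the main obstacle.
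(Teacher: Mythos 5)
Your proposal follows essentially the same route as the paper's own proof: part (a) is the composition of the tableau encodings with the walk-level map $\varphi$ of Lemma~\ref{L:bijection-2}, with the length shift $2N\leftrightarrow 2(N-1)$ absorbed by the forced trivial steps $\lambda^1=\lambda^{2N-1}=\varnothing$, and part (b) rests on the observation that a span-one arc and a loop are recorded by one and the same cancelling pair $(+\square_h,-\square_h)$, which $\varphi$ fixes. The ``local dictionary'' you flag as the main remaining obstacle is exactly the step the paper also asserts without further argument (pinning the row down to $h=1$), so your write-up is, if anything, slightly more explicit than the original.
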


\begin{figure}[ht]
\centerline{%
\epsfig{file=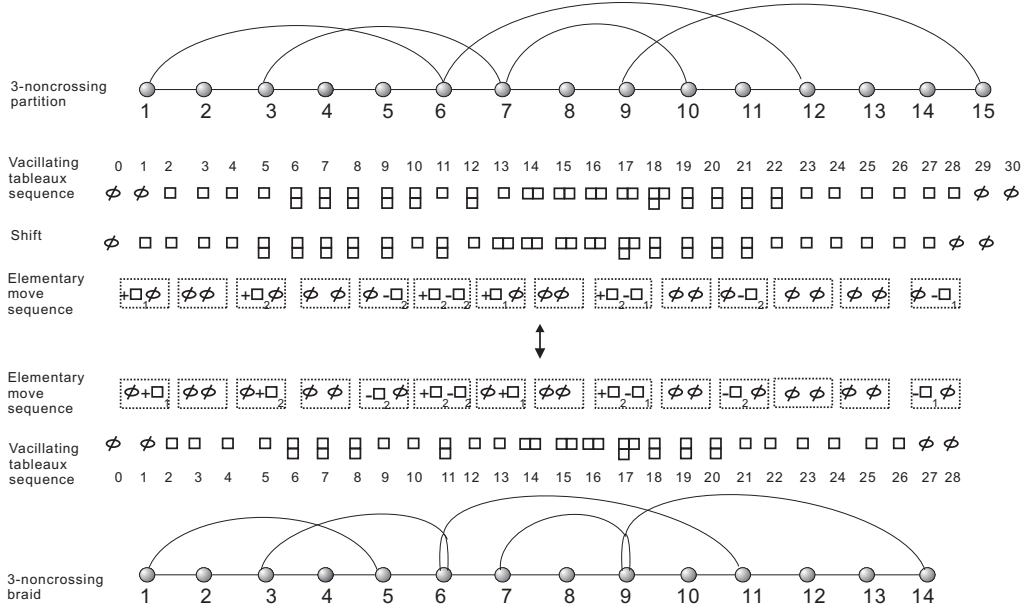,width=0.9\textwidth}\hskip15pt
 }
\caption{\small Mapping $2$-regular, $3$-noncrossing partitions into
loop-free braids: illustration of Corollary~\ref{C:bi}(b).}
\label{F:map1}
\end{figure}
\begin{proof}
Assertion (a) follows from Lemma~\ref{L:bijection-2} since a
partition is completely determined by its induced
$\mathcal{P}_{W_{k-1}}$-walk of shape $\lambda^{2n-1}=\varnothing$.
(b) follows immediately from the fact that, according to the
definition of $\varphi$ given in Lemma~\ref{L:bijection-2}, any pair
of consecutive EMs $(\varnothing,
+\square_1),(-\square_1,\varnothing)$ induces an EM
$(+\square_1,-\square_1)$. Therefore, $\vartheta$ maps $2$-regular,
$k$-noncrossing partitions into $k$-noncrossing braids without
loops. We illustrate Corollary~\ref{C:bi}(b) in Fig.\ref{F:map1}.
\end{proof}
\begin{lemma}\label{L:regq}
The number of $\mathcal{B}_{W_{k-1}}^*$-walks ending at $\nu\in
W_{k-1}$ of length $2\ell$, where $1\leq \ell\leq N$ is given by
\begin{equation}
\sigma^{\nu;*}_{k,2\ell} = \sum_{h}(-1)^{h}{\ell\choose
h}\omega^{\nu}_{2(\ell-h)+1}.
\end{equation}
Furthermore, we obtain the recurrence of
$\sigma^{\nu;*}_{k,2\ell-1}$ given by
\begin{equation}
\sigma^{\nu;*}_{k,2\ell-1}
=\sum_{j=0}^{k-1}\sigma^{\nu-\mathbf{e_{j}};*}_{k,2\ell+2}.
\end{equation}
\end{lemma}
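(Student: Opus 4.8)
The plan is to obtain the formula for $\sigma^{\nu;*}_{k,2\ell}$ by inclusion–exclusion over the set of "loops'' in a braid, and then to derive the recurrence for $\sigma^{\nu;*}_{k,2\ell-1}$ directly from the lattice-walk construction, exactly as in the odd/even step bookkeeping of Lemma~\ref{L:pk}. First I would recall from Corollary~\ref{C:bi} and Lemma~\ref{L:bijection-2} that $k$-noncrossing braids on $[N-1]$ correspond bijectively to $\mathcal{B}_{W_{k-1}}$-walks, and that under this correspondence a loop $(i,i)$ in the braid is exactly a pair of consecutive elementary moves of the form $(\varnothing,+\square_1),(-\square_1,\varnothing)$ — equivalently, an EM $(+\square_1,-\square_1)$ at some vertex after the transposition $\varphi$. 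So a loop-free braid is a $\mathcal{B}_{W_{k-1}}^*$-walk, and to count these one counts all $\mathcal{B}_{W_{k-1}}$-walks while sieving out those with at least one loop.

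Next I would set up the inclusion–exclusion. Fix the endpoint $\nu\in W_{k-1}$ and the number of vertices; a braid with a distinguished set of $h$ loops, after deleting those $h$ loops, is a braid on fewer vertices whose associated $\mathcal{B}_{W_{k-1}}$-walk has length reduced by $2h$ — but because each deleted loop removes one "round'' of moves, the surviving object is counted by $\omega^{\nu}_{2(\ell-h)+1}$ (the odd-length count already appearing in Proposition~\ref{P:reflect} and Lemma~\ref{L:pk}, which enumerates the partition-type walks reached by Lemma~\ref{L:bijection-2}). The number of ways to choose which $h$ of the $\ell$ vertices carry the distinguished loops is $\binom{\ell}{h}$, and the sign $(-1)^h$ is the inclusion–exclusion sign; summing over $h$ gives
\[
\sigma^{\nu;*}_{k,2\ell}=\sum_h(-1)^h\binom{\ell}{h}\,\omega^{\nu}_{2(\ell-h)+1}.
\]
I would make precise that "distinguished set of $h$ loops'' means any $h$-subset of the vertices at which a loop occurs, so that a braid with exactly $t$ loops is counted $\binom{t}{h}$ times with sign $(-1)^h$, and $\sum_h(-1)^h\binom{t}{h}=[t=0]$ isolates the loop-free braids; this is the standard inclusion–exclusion identity and is the only genuinely non-routine step.

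Finally, for the recurrence on the odd-length quantity $\sigma^{\nu;*}_{k,2\ell-1}$, I would argue exactly as in the last line of the proof of Lemma~\ref{L:pk}: a $\mathcal{B}_{W_{k-1}}^*$-walk of odd length ending at $\nu$ is obtained from one of even length ending at $\nu-\mathbf{e_j}\in W_{k-1}$ by appending the final odd step $+\mathbf{e_j}$ (with $\mathbf{e_0}=\mathbf 0$), so summing over $j\in\{0,1,\dots,k-1\}$ gives $\sigma^{\nu;*}_{k,2\ell-1}=\sum_{j=0}^{k-1}\sigma^{\nu-\mathbf{e_j};*}_{k,2\ell-2}$, which is the asserted recurrence (matching the index bookkeeping used throughout Section~\ref{S:pk}). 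The main obstacle is the first part: correctly identifying what object a braid with $h$ marked loops reduces to and verifying that its count is $\omega^{\nu}_{2(\ell-h)+1}$ rather than some other shifted parameter — that is, making the loop-deletion map and its inverse explicit enough that the length drops by exactly $2h$ and the endpoint $\nu$ is preserved.
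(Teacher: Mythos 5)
Your proposal follows essentially the same route as the paper: it identifies loops with the elementary move $(+\square_1,-\square_1)$, applies inclusion--exclusion over a distinguished set of $h$ loops with the count $\binom{\ell}{h}\sigma^{\nu}_{k,2(\ell-h)}=\binom{\ell}{h}\omega^{\nu}_{k,2(\ell-h)+1}$ via the bijection of Lemma~\ref{L:bijection-2}, and derives the odd-length recurrence by appending the final $+\mathbf{e_j}$ step. Note that your derivation yields $\sigma^{\nu;*}_{k,2\ell-1}=\sum_{j=0}^{k-1}\sigma^{\nu-\mathbf{e_j};*}_{k,2\ell-2}$, which agrees with the paper's proof and correctly fixes the index $2\ell+2$ appearing in the lemma's displayed statement, evidently a typo.
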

\begin{proof}
According to Lemma~\ref{L:bijection-2}, any
$\mathcal{P}_{W_{k-1}}$-walk of length $2\ell+1$ corresponds to a
unique $\mathcal{B}_{W_{k-1}}$-walk of length $2\ell$, whence
$\sigma^{\nu}_{k,2\ell}=\omega^{\nu}_{k,2\ell+1}$. Let
$A_{2\ell}(h)$ denote the set of $\mathcal{B}_{W_{k-1}}$-walks of
length $2\ell$ in which there exist at least $h$ pairs of shapes
$(\mu_{2q-1},\mu_{2q})$ induced by the EM $(+\mathbf{e_{1}},
-\mathbf{e_{1}})$. Since the removal of $h$ EMs, $(+\mathbf{e_{1}},
-\mathbf{e_{1}})$, from a $\mathcal{B}_{W_{k-1}}$-walk, results in a
$\mathcal{B}_{W_{k-1}}$-walk of length $2(\ell-h)$, we derive
$A_{2\ell}(h)= {\ell \choose h}\,\sigma^{i,j}_{k,2\ell-2h}$. Using
the inclusion-exclusion principle, we arrive at
\begin{equation}\label{E:IE}
\sigma^{\nu;*}_{k,2\ell}=\sum_{h}(-1)^{h}{\ell \choose
h}\sigma^{\nu}_{k,2(\ell-h)}= \sum_{h}(-1)^{h}{\ell \choose
h}\omega^{\nu}_{k,2(\ell-h)+1}.
\end{equation}
By construction, an odd step in a $\mathcal{B}_{W_{k-1}}$-walk is
$+\mathbf{e_{j}}$, where $j\in\{0,1,\dots,k-1\}$, whence
\begin{equation*}
\sigma^{\nu;*}_{k,2\ell-1}
=\sum_{j=0}^{k-1}\sigma^{\nu-\mathbf{e_{j}};*}_{k,2\ell-2}.
\end{equation*}
\end{proof}
Via Lemma~\ref{L:regq}, we have explicit knowledge about the numbers
of $\mathcal{B}_{W_{k-1}}^*$-walks, i.e.~$\sigma^{\nu;*}_{k,s}$ for
all $\nu\in W_{k-1}$ and $1\leq s\leq 2N$. Accordingly, we are now
in position to generate $2$-regular, $k$-noncrossing partitions with
uniform probability via $k$-noncrossing, loop-free braids.
\begin{theorem}\label{T:regpart}
A random $2$-regular, $k$-noncrossing partition can be generated, in
$O(N^{k+1})$ pre-processing time and $O(N^{k})$ space complexity,
with uniform probability in linear time. Each $2$-regular,
$k$-noncrossing partition is generated with $O(N)$ space and time
complexity, see Algorithm 2, 3.
\end{theorem}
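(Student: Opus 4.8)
The plan is to mirror the structure of the proof of Theorem~\ref{T:algpk}, but to run the Markov-process over the $\mathcal{B}_{W_{k-1}}^*$-walks (loop-free $k$-noncrossing braids) instead of over $\mathcal{P}_{W_{k-1}}$-walks, and then to transport the uniformly generated loop-free braid back to a $2$-regular, $k$-noncrossing partition via the bijection $\vartheta$ of Corollary~\ref{C:bi}(b). Concretely, I would first record that, by Corollary~\ref{C:bi}(b), it suffices to generate a uniformly random $k$-noncrossing braid without loops over $[N-1]$; the map $\vartheta^{-1}$ (read off from $\varphi$ in Lemma~\ref{L:bijection-2}) is computable in $O(N)$ time and carries the uniform distribution on $\mathcal{B}_k^*(N-1)$ to the uniform distribution on $\mathcal{P}_{k,2}(N)$.

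Next I would set up the stochastic process $(Y^i)_{i=0}^{2(N-1)}$ on shapes with at most $k-1$ rows, where $Y^0=Y^{2(N-1)}=\varnothing$, even steps are $-\mathbf{e_i}$ or $\varnothing$ and odd steps are $+\mathbf{e_i}$ or $\varnothing$ (the EM pattern of $\mathcal{B}_{W_{k-1}}^*$-walks), and where the transition probability from a shape $\mu^i$ (corresponding to $\nu\in W_{k-1}$, with $2(N-1)-i$ steps remaining) to an admissible successor $\mu^{i+1}_\alpha$ is
\[
\mathbb{P}(Y^{i+1}=\mu^{i+1}\mid Y^i=\mu^i)=\frac{\sigma^{\,\cdot\,;*}_{k}\big(\mu^{i+1}_\alpha,\;2(N-1)-i-1\big)}{\sigma^{\,\cdot\,;*}_{k}\big(\mu^i,\;2(N-1)-i\big)},
\]
where $\sigma^{\,\cdot\,;*}_{k}(\mu,s)$ denotes the number of loop-free $k$-noncrossing-braid continuations of the partial walk ending at $\mu$ with $s$ steps remaining, i.e.\ the count $\sigma^{\nu;*}_{k,s}$ of Lemma~\ref{L:regq} (with a reflection, via Proposition~\ref{P:reflect}, to reduce $W_{k-1}$-counts to $Q_{k-1}$-counts where convenient). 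Telescoping the product of transition probabilities along a full sampling path collapses to $1/\sigma^{\varnothing;*}_{k,2(N-1)}$, independently of the path, so every loop-free braid is produced with equal probability; sampling one path costs $O(N)$ time and space, as in Algorithm~1. This is the argument encoded in Algorithms~2 and~3.

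It remains to account for the preprocessing. By Lemma~\ref{L:regq}, $\sigma^{\nu;*}_{k,2\ell}=\sum_h(-1)^h\binom{\ell}{h}\omega^{\nu}_{k,2(\ell-h)+1}$ and $\sigma^{\nu;*}_{k,2\ell-1}=\sum_{j=0}^{k-1}\sigma^{\nu-\mathbf{e_j};*}_{k,2\ell-2}$. The quantities $\omega^{\nu}_{k,\cdot}$ (equivalently the $f^{\nu}_{k,\cdot}$ of Lemma~\ref{L:pk}) are tabulated for all $\nu\in W_{k-1}$ with $\sum_i\nu_i=O(N)$ and all lengths $\le 2N$ by the recursion~\eqref{E:rk} in $O(N^k)$ time and $O(N^k)$ space, exactly as in Theorem~\ref{T:algpk}. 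The extra ingredient here is the inclusion-exclusion convolution over $h$: for each of the $O(N^{k-1})$ admissible $\nu$ and each of the $O(N)$ even lengths, forming $\sum_h(-1)^h\binom{\ell}{h}\omega^{\nu}_{k,2(\ell-h)+1}$ costs $O(N)$ arithmetic operations, giving an $O(N^{k+1})$ total; the $\sigma^{\nu;*}_{k,2\ell-1}$ are then filled in by a single $O(N^k)$-time pass over the recursion, so the overall preprocessing is $O(N^{k+1})$ time and $O(N^k)$ space. Once the table $\{\sigma^{\nu;*}_{k,s}\}$ is available, each transition probability in the process above is looked up in $O(1)$ time, each $2$-regular, $k$-noncrossing partition is sampled in $O(N)$ time and space, and the theorem follows.

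The main obstacle is bookkeeping rather than conceptual: one must check that the reindexing between the loop-free braid walk of length $2(N-1)$ and the $2$-regular partition on $[N]$ is consistent (the $-1$ shift in Corollary~\ref{C:bi}), that the inclusion-exclusion of Lemma~\ref{L:regq} is applied to precisely the walks the process samples (so that the normalizing constant telescopes correctly), and that the convolution step is the genuine bottleneck that upgrades the $O(N^k)$ preprocessing of Theorem~\ref{T:algpk} to $O(N^{k+1})$ without inflating the space beyond $O(N^k)$.
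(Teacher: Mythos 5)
Your overall architecture coincides with the paper's: reduce to loop-free $k$-noncrossing braids via Corollary~\ref{C:bi}(b), run a Markov process over $\mathcal{B}_{W_{k-1}}^*$-walks whose transition probabilities are ratios of continuation counts, and obtain the $O(N^{k+1})$ preprocessing bound from the inclusion--exclusion convolution in Lemma~\ref{L:regq}. The complexity accounting at the end is also the paper's.

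However, there is a genuine gap in your definition of the transition probabilities. You declare that the number of loop-free continuations of a partial walk ending at the shape $\mu$ with $s$ steps remaining ``is the count $\sigma^{\nu;*}_{k,s}$ of Lemma~\ref{L:regq}'', i.e.\ a function of the shape and the remaining length alone. This is false at odd positions: the loop constraint forbids precisely the consecutive pair $(+\square_1,-\square_1)$ within one elementary move, so after an odd step the set of admissible even steps --- and hence the number of loop-free continuations --- depends on whether the odd step just taken was $+\square_1$ or not. If you use $\sigma^{\nu;*}_{k,s}$ as the denominator at such a state while restricting the numerators to admissible successors, the transition probabilities no longer sum to $1$; if you renormalize, the telescoping product is no longer path-independent and uniformity fails. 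This is exactly the point the paper's proof spends most of its effort on: it introduces two distinct count functions, $W_k^*(\lambda^{2s},\cdot)$ for even positions (which is indeed independent of the incoming move) and $V_k^*(\lambda^{2s+1}_{\alpha},\cdot)$ for odd positions, together with an explicit case analysis on $\alpha$ (excluding the $-\square_1$ continuation when $\alpha=+\square_1$), implemented in Algorithm~3 via the {\sf FLAG} subroutine. Your proposal needs this extra state (shape together with incoming move) and the corresponding case-split recursion to make the process well-defined and the telescoping argument valid; without it the claimed uniformity does not follow.
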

\begin{algorithm}
\label{A:alg2}
\begin{algorithmic}[1]
\STATE{\it Tableaux} (Initialize the sequence of shapes to be a list
$\{\lambda^{i}\}_{i=0}^{i=2N}$)

\STATE {$\lambda^{0}\leftarrow\varnothing$,
$\lambda^{2N}\leftarrow\varnothing$,
$\lambda^{2N-1}\leftarrow\varnothing$, $i\leftarrow1$}

\WHILE {$i < 2N-1$ }

\IF {$i$ is even}

\STATE  X[0]$\leftarrow$ ${\sf
V_{k}^{*}}(\lambda^{i+1}_{\varnothing}, 2N-(i+1))$

\FOR{$j=1$ to $k-1$}

\STATE X[$2j-1$]$\leftarrow$ ${\sf
V_{k}^*}(\lambda^{i+1}_{+\square_{j}}, 2N-(i+1))$,
X[$2j$]$\leftarrow$ ${\sf V_{k}^*}(\lambda^{i+1}_{-\square_{j}},
2N-(i+1))$

\ENDFOR

\ELSE

\STATE  X[0]$\leftarrow$ ${\sf W^{*}}(\lambda^{i+1}_{\varnothing},
2N-(i+1))$

\FOR{$j=1$ to $k-1$}

\STATE X[$2j-1$]$\leftarrow$ ${\sf
W_{k}^*}(\lambda^{i+1}_{+\square_{j}}, 2N-(i+1))$,
X[$2j$]$\leftarrow$ ${\sf W_{k}^*}(\lambda^{i+1}_{-\square_{j}},
2N-(i+1))$

\ENDFOR

\STATE {\sf FLAG}(flag0, flag1, flag2, flag3)

\STATE {\it sum} $\leftarrow$ $\sum_{t=0}^{2(k-1)}$X[t]

\ENDIF

\STATE {\it Shape} $\leftarrow$ {\sf Random}({\it sum}) ({\sf
Random} generates the random shape $\lambda^{i+1}_{+\square_{j}(or
-\square_{j})}$ with probability {X[2j-1]}$/{\it sum}$ (or
X[2j]$/sum$) or $\lambda^{i+1}_{\varnothing}$ with probability
X[0]$/sum$)

\STATE {flag0 $\leftarrow$ 1, flag1 $\leftarrow$ 1, flag2
$\leftarrow$ 1, flag3 $\leftarrow$ 1}

\IF {$i$ is even and Shape=$\lambda^{i+1}_{+\square_{1}}$}

\STATE flag1 $\leftarrow$ 0

\ENDIF

\FOR {$1\leq s\leq k-1$}

\IF {$i$ is even and Shape=$\lambda^{i+1}_{-\square_{s}}$}

\STATE flag2 $\leftarrow$ 0

\ENDIF

\ENDFOR

\FOR {$2\leq s\leq k-1$}

\IF {$i$ is even and Shape=$\lambda^{i+1}_{+\square_{s}}$ }

\STATE flag3 $\leftarrow$ 0

\ENDIF

\ENDFOR

\IF {$i$ is even and Shape=$\lambda^{i+1}_{\varnothing}$}

\STATE flag0 $\leftarrow 0$

\ENDIF

\STATE Insert {\it Shape} into {\it Tableaux} (the sequence of
shapes)

\STATE $i \leftarrow i+1$

\ENDWHILE

\STATE  {\sf Map}({\it Tableaux}) (maps {\it Tableaux} into its
corresponding $2$-regular, $k$-noncrossing partition)
\end{algorithmic}
\caption{\small {\sf Tableaux}: Uniform generation of $2$-regular,
$k$-noncrossing partitions.}
\end{algorithm}
\begin{algorithm}
\label{A:alg3}
\begin{algorithmic}[1]
\STATE{\it FLAG}(flag0, flag1, flag2, flag3)

\IF {flag0 =0}

\FOR{$j=1$ to $k-1$}

\STATE X[$2j$]$\leftarrow 0$

\ENDFOR

\ENDIF

\IF {flag1 =0}

\STATE X[$0$]$\leftarrow 0$, X[$2$]$\leftarrow 0$

\FOR{$j=1$ to $k-1$}

\STATE X[$2j-1$]$\leftarrow 0$

\ENDFOR

\ENDIF

\IF {flag2 =0}

\FOR{$j=1$ to $k-1$}

\STATE X[$2j-1$]$\leftarrow 0$, X[$2j$]$\leftarrow 0$

\ENDFOR

\ENDIF

\IF {flag3 =0}

\FOR{$j=1$ to $k-1$}

\STATE X[$2j-1$]$\leftarrow 0$

\ENDFOR

\ENDIF

\end{algorithmic}
\caption{\small {\sf FLAG}: Distinguish the last odd step}
\end{algorithm}
\begin{proof}
We interpret $\mathcal{B}_{W_{k-1}}^*$-walks as sampling paths of a
stochastic process. To this end, we again label the $(i+1)$-th
shape, $\lambda^{i+1}_{\alpha}$ by $\alpha=\lambda^{i+1}\setminus
\lambda^{i}$,
$\alpha\in\{\{+\square_{j}\}_{j=1}^{j=k-1},\{-\square_{j}\}_{j=1}^{j=k-1},
\varnothing\}$,
where the labeling specifies the transition from $\lambda^i$ to
$\lambda^{i+1}$. In the following, we distinguish even and odd
labeled shapes. \\
For $i=2s$, we let $W_{k}^{*}(\lambda^{2s}_\alpha, 2N-2s)$ denote
the number of $\mathcal{B}_{W_{k-1}}^*$-walks such that
$\alpha=\lambda^{2s}\setminus \lambda^{2s-1}$. By construction,
assume $\lambda^{2s}$ has $\nu_{i}-k-i+3$ boxes in $i$-th row,
respectively, then we have
\begin{equation}\label{E:ws}
W_{k}^{*}(\lambda^{2s}_\alpha, 2N-2s)=\sigma^{\nu;*}_{k,2N-2s}
\end{equation}
i.e.~$W^{*}_{k}(\lambda^{2s}_\alpha, 2N-2s)$  is independent of
$\alpha$ and we write $W_{k}^{*}(\lambda^{2s}_\alpha, 2N-2s)=
W_{k}^{*}(\lambda^{2s}, 2N-2s)$. \\
For $i=2s+1$, let $V_{k}^{*}(\lambda^{2s+1}_{\alpha},2N-(2s+1))$
denote the number of $\mathcal{B}_{W_{k-1}}^*$-walks of shape
$\lambda^{2s+1}$ of length $2N-(2s+1)$ where
$\lambda^{2s+1}\setminus\lambda^{2s}=\alpha$. Then we have setting
$u=2N-2s-2$
\begin{equation}
V_{k}^{*}(\lambda^{2s+1}_{\alpha},u+1)=
\begin{cases}
\sum_{j=2}^{k-1}W_{k}^{*} (\lambda^{2s+2}_{-\square_{j}},u) &
\text{\rm for} \
\alpha=+\square_{1};\\
\sum_{t\neq
0,j}W_{k}^{*}(\lambda^{2s+2}_{-\square_{t}},u)+\frac{1}{2}(1+(-1)^{\beta})W_{k}^{*}
(\lambda^{2s+2}_{-\square_{j}},u)
& \text{\rm for} \ \alpha=+\square_{j},\ \ j\neq 0,1;\\
\sum_{t=0}^{k-1}W_{k}^{*}(\lambda^{2s+2}_{+\square_{t}},u)
& \text{\rm for} \ \alpha=\varnothing; \\
W_{k}^{*}(\lambda^{2s+2}_{\varnothing},u) & \text{\rm for} \ \alpha=
-\square_{j},\ \ j\neq 0,
\end{cases}
\end{equation}
in which,
\begin{equation}
\beta=
\begin{cases}
1 & W_{k}^{*}(\lambda^{2s+1}_{\alpha},u+1)=0,\\
0 & W_{k}^{*}(\lambda^{2s+1}_{\alpha},u+1)\neq 0.\\
\end{cases}
\end{equation}
We are now in position to specify the process $(X^{i})_{i=0}^{i=2N}$:\\
$\bullet$ $X^{0}=X^{2N}=\varnothing$ and $X^{i}$ is a shape with at
most $k-1$ rows\\
$\bullet$ for $1\leq i\leq N-1$, $(X^{2i+1}\setminus X^{2i},
X^{2i+2}\setminus
X^{2i})\in\{(-\square,\varnothing),(\varnothing,+\square),
(\varnothing,\varnothing),(+\square,-\square)\}$\\
$\bullet$ there does not exist any subsequence $(X^{2i}, X^{2i+1},
X^{2i+2})$ such that $(X^{2i+1}\setminus X^{2i},
X^{2i+2}\setminus X^{2i})=(+\square_1,-\square_{1})$\\
$\bullet$
the transition probabilities are given as follows\\
{\sf(1)} for $i=2\ell$, we obtain
\begin{equation*}
\mathbb{P}(X^{i+1}=\lambda^{i+1}_{\alpha}\vert X^{i}=\lambda^{i})=
\frac{V_{k}^{*}(\lambda^{i+1}_{\alpha},2N-i-1)}{W_{k}^{*}(\lambda^{i},2N-i)}.
\end{equation*}
{\sf(2)} for $i=2\ell+1$, we have
\begin{equation*}
\mathbb{P}(X^{i+1}=\lambda^{i+1}_{}\vert
X^{i}=\lambda^{i}_{\alpha})=
\frac{W_{k}^{*}(\lambda^{i+1}_{},2N-i-1)}
{V_{k}^{*}(\lambda^{i}_{\alpha},2N-i)}.
\end{equation*}
By construction,
\begin{equation}
\prod_{i=0}^{2N-1}\mathbb{P}(X^{i+1}=\lambda^{i+1}\mid
X^i=\lambda^i)= \frac{{ W_{k}^*}(\lambda^{2N}=\varnothing,0)} {{
W_{k}^*}(\lambda^{0}=\varnothing,2N)}
=\frac{1}{{W_{k}^*}(\varnothing,2N)},
\end{equation}
whence $(X^i)_{i=0}^{2N}$ generates random $2$-regular,
$k$-noncrossing partitions with uniform probability in $O(N)$ time
and space.\\
As for the computation of the transition probabilities, according to
Theorem \ref{T:algpk} the terms $\omega^{\nu}_{k, \ell}$ for $\nu\in
{W}_{k-1}$ and $1\leq \ell\leq 2N+1$ can be calculated in
$O(N^{k+1})$ time and $O(N^k)$
space.\\
We claim that for fixed indices $\nu=(\nu_1,\nu_2,\dots,\nu_{k-1})$
where $\nu\in{W}_{k-1}$ and $1\leq s\leq 2N$, $\sigma^{\nu;*}_{k,s}$
can be computed in $O(N)$ time. There are two cases: in case of
$s=2\ell_1$, using a {\bf For}-loop summing over the terms
$(-1)^{h}{\ell_1 \choose h} \omega^{\nu}_{k,2(\ell_1-h)+1}$, we
derive $\sigma^{\nu;*}_{k,2\ell_1}$. In case of $s=2\ell_1+1$, we
calculate $\sigma^{\nu-\mathbf{e_{i}};*}_{k,2\ell_1}$, where
$i\in\{0,1,\dots,k-1\}$ via eq.~(\ref{E:IE}). Then
$\sigma^{\nu;*}_{k,2\ell_1+1}$ follows in view of
$\sigma^{\nu;*}_{k,2\ell_1+1}=
\sum_{j=0}^{k-1}\sigma^{\nu-\mathbf{e_{j}};*}_{k,2\ell_1}$.\\
Furthermore, using $k$ nested {\bf For}-loops for $\nu_{i}$, we
derive $\sigma^{\nu;*}_{k,s}$ for arbitrary $\nu$ and $s$.
Consequently, we compute $\sigma^{\nu;*}_{k,s}$ for all $\nu$, $s$
with $O(N^k)+O(N^k\times N)=O(N^{k+1})$ time and $O(N^{k})$
space complexity.\\
Once the terms $\sigma^{\nu;*}_{k,s}$ for $\nu$ and $s$ are
calculated, we can compute the transition probabilities in $O(1)$
time. Therefore we obtain the transition probabilities
${W^{*}(\lambda^{i},2N-i)}$ in $O(N^{k+1})$ time and $O(N^{k})$
space complexity and the theorem follows.
\end{proof}
\NEW{\textbf{Remark} We remark here it is feasible to generate a
$2$-regular, $k$-noncrossing partition in $O(N^{k})$ time complexity
if we obtain the transition probabilities for
$\mathcal{B}_{W_{k-1}}^*$-walks follow the similar routine of
$\mathcal{P}_{W_{k-1}}$-walks. The reason we show a different
routine is we prefer to compute transition probabilities via the
relation between two different combinatorial objects.}
\section{Example: in case of $k=4$}\label{S:4}
In the following, we omit $k=4$ in the subscripts of the notations.
\subsection{Example: random $4$-noncrossing partitions}\label{S:p4}
Set $a^{i,j,r}_{s}$ denote the number of $\mathcal{P}_{Q_3}$-walks
of length $s$ starting at $(2,1,0)$, ending at $(i,j,r)\in Q_3$ and
set $f^{i,j,r}_{s}=a^{i,j,r}_{2s+1}$.

\begin{lemma}\label{L:p4}
{\rm (a)} Suppose  $1\leq \ell\leq N-1$ and $(i,j,r)\in W_3$, then
\begin{equation}\label{E:o4}
\omega^{i,j,r}_{s}=
\begin{cases}
f^{i,j,r}_{\ell}-f^{j,i,r}_{\ell}-f^{r,j,i}_{\ell}-f^{i,r,j}_{\ell}
+f^{j,r,i}_{\ell}+f^{r,i,j}_{\ell}
& \text{\rm for $s=2\ell+1$},\\
f^{i,j,r}_{\ell}+f^{i-1,j,r}_{\ell}+f^{i,j-1,r}_{\ell}+f^{i,j,r-1}_{\ell}
-f^{j,i,r}_{\ell}- f^{j-1,i,r}_{\ell}-
f^{j,i-1,r}_{\ell}-f^{j,i,r-1}_{\ell} &\\
-f^{r,j,i}_{\ell}-f^{r-1,j,i}_{\ell}-f^{r,j-1,i}_{\ell}-f^{r,j,i-1}_{\ell}
-f^{i,r,j}_{\ell}-f^{i-1,r,j}_{\ell}-f^{i,r-1,j}_{\ell}-f^{i,r,j-1}_{\ell}&\\
+f^{j,r,i}_{\ell}+f^{j-1,r,i}_{\ell}+f^{j,r-1,i}_{\ell}+f^{j,r,i-1}_{\ell}
+f^{r,i,j}_{\ell}+f^{r-1,i,j}_{\ell}+f^{r,i-1,j}_{\ell}+f^{r,i,j-1}_{\ell}
&  \text{\rm for $s=2\ell+2$}.\\
\end{cases}
\end{equation}
where for $a,b,c\in \mathbb{Z}$, $f^{a,b,c}_{\ell}=0$ for $a+b+c\geq
\ell+3$ and for $\ell=0$,
\begin{equation*}
f^{i,j,r}_{0}=
\begin{cases}
1 & \textrm{for } i=2,j=1,r=0; \\
0 & \textrm{otherwise.}
\end{cases}
\end{equation*}
{\rm (b)} $f^{p,q,s}_{\ell}$ satisfies the recursion
\begin{eqnarray*}
f^{p,q,s}_{\ell}=f^{p,q+1,s}_{\ell-1}&+&f^{p+1,q,s}_{\ell-1}+f^{p,q,s+1}_{\ell-1}
f^{p-1,q+1,s}
_{\ell-1}+f^{p+1,q-1,s}_{\ell-1}+f^{p-1,q,s+1}_{\ell-1}\\
&+&f^{p+1,q,s-1}_{\ell-1}+
f^{p,q-1,s+1}_{\ell-1}+f^{p,q+1,s-1}_{\ell-1}
+4f^{p,q}_{\ell-1}+f^{p-1,q,s}_{\ell-1}+f^{p,q-1,s}_{\ell-1}+f^{p,q,s-1}.
\end{eqnarray*}
\end{lemma}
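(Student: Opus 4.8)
The plan is to derive Lemma~\ref{L:p4} as the explicit $k=4$ specialization of Lemma~\ref{L:pk} and Proposition~\ref{P:reflect}, so essentially no new idea is needed; the work is bookkeeping with the permutation group $\mathbb{S}_3$ and the six lattice directions $\mathbf{e_0},\dots,\mathbf{e_3}$. First I would set $k=4$, so that $k-1=3$, the dominant chamber is $W_3=\{(i,j,r):i>j>r\ge 0\}$, the starting point is $\epsilon=(2,1,0)$, and the ambient nonnegative cone is $Q_3$. By definition $f^{i,j,r}_\ell=a^{i,j,r}_{2\ell+1}$, the number of $\mathcal{P}_{Q_3}$-walks of length $2\ell+1$ from $(2,1,0)$ to $(i,j,r)$, and the vanishing condition $f^{a,b,c}_\ell=0$ for $a+b+c\ge\ell+3$ together with the initial value $f^{i,j,r}_0=\delta_{(i,j,r),(2,1,0)}$ is just eq.~\eqref{E:lk0}'s boundary data with $\lambda=\epsilon$ and $\frac{(k-1)(k-2)}{2}=3$.

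For part (a), I would apply Proposition~\ref{P:reflect}: $\omega^{(i,j,r)}_{\ell}=\sum_{\pi\in\mathbb{S}_3}\operatorname{sgn}(\pi)\,a^{\pi(i,j,r)}_{\ell}$. For $\ell=2\ell'+1$ odd, $a^{\pi(\mu)}_{2\ell'+1}=f^{\pi(\mu)}_{\ell'}$, and enumerating the six elements of $\mathbb{S}_3$ — the identity, the three transpositions (sign $-1$) swapping two coordinates, and the two $3$-cycles (sign $+1$) — produces exactly the six terms $f^{i,j,r}_\ell-f^{j,i,r}_\ell-f^{r,j,i}_\ell-f^{i,r,j}_\ell+f^{j,r,i}_\ell+f^{r,i,j}_\ell$ in the first case of eq.~\eqref{E:o4}. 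For $\ell=2\ell'+2$ even, eq.~\eqref{E:lk0} gives $a^{\mu}_{2\ell'+2}=\sum_{t=0}^{3}f^{\mu-\mathbf{e_t}}_{\ell'}=f^{\mu}_{\ell'}+f^{\mu-\mathbf{e_1}}_{\ell'}+f^{\mu-\mathbf{e_2}}_{\ell'}+f^{\mu-\mathbf{e_3}}_{\ell'}$ (since $\mathbf{e_0}=\mathbf{0}$), so that $\omega^{\mu}_{2\ell'+2}=\sum_{\pi}\operatorname{sgn}(\pi)\big(f^{\pi(\mu)}_{\ell'}+\sum_{t=1}^{3}f^{\pi(\mu)-\mathbf{e_t}}_{\ell'}\big)$; expanding each of the six permuted $\mu=(i,j,r)$ and subtracting one from each coordinate in turn yields the four-line block of $24$ terms displayed in the second case. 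Here one must be careful that subtracting $\mathbf{e_t}$ acts on the coordinate \emph{position} $t$, not on the coordinate \emph{value}, so for $\pi=(i\,j)$-type permutations the decremented entry is placed correctly; I would note the pattern that within each sign-$\pm$ group of four terms the decrement cycles through the three slots of the permuted triple, and this matches eq.~\eqref{E:o4} line by line.

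For part (b), the claim is the $k=4$ instance of the recursion eq.~\eqref{E:rk}, namely $f^{\nu}_{k,\ell}=\sum_{i=0}^{k-1}\sum_{j=0}^{k-1}f^{\nu+\mathbf{e_i}-\mathbf{e_j}}_{k,\ell-1}$, whose combinatorial content (given in the proof of Lemma~\ref{L:pk}) is that a $\mathcal{P}$-walk of length $2\ell+1$ ending at $\nu$ is obtained from one of length $2\ell-1$ ending at $\nu+\mathbf{e_j}-\mathbf{e_i}$ by appending the even step $+\mathbf{e_i}$ then the odd step $-\mathbf{e_j}$. With $k=4$ the double sum runs over $i,j\in\{0,1,2,3\}$, giving $16$ terms $f^{\,p,q,s\,+\mathbf{e_i}-\mathbf{e_j}}_{\ell-1}$; the four pairs with $i=j$ all contribute $f^{p,q,s}_{\ell-1}$, accounting for the $4f^{p,q,s}_{\ell-1}$ term (which the statement writes as $4f^{p,q}_{\ell-1}$, evidently a typo for $4f^{p,q,s}_{\ell-1}$), and the remaining twelve ordered pairs $(i,j)$ with $i\ne j$ give the twelve shifted terms listed. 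The main obstacle — really the only one — is purely clerical: keeping the coordinate indices aligned through the $\mathbb{S}_3$-expansion and the $\pm\mathbf{e_t}$ shifts, and reconciling the several evident typographical slips in the displayed formula of the statement (the stray relabelling $f^{p,q}$, a missing subscript $\ell-1$ on the last term, and a missing $+$ sign in the fourth line) with the clean form dictated by eqs.~\eqref{E:rk} and \eqref{E:o4}. I would therefore present the proof as: quote Proposition~\ref{P:reflect} and Lemma~\ref{L:pk}, substitute $k=4$, and expand; the identities then hold term by term.
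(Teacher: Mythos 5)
Your proposal is correct and follows essentially the same route as the paper's own (much terser) proof: specialize Proposition~\ref{P:reflect} to $k=4$, use $a^{i,j,r}_{2\ell+1}=f^{i,j,r}_{\ell}$ and $a^{\nu}_{2\ell+2}=\sum_{t=0}^{3}f^{\nu-\mathbf{e_t}}_{\ell}$ to expand the signed sum over $\mathbb{S}_3$ for part (a), and invoke the last-two-steps decomposition of eq.~(\ref{E:rk}) for part (b). Your bookkeeping (six permutations times four shift terms giving the $24$-term expression, and $4+12$ terms from the double sum with the diagonal collapsing to $4f^{p,q,s}_{\ell-1}$) is accurate, and the typographical slips you flag in the displayed recursion are indeed present in the paper.
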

\begin{proof}
We first prove assertion (a). Indeed, by definition of
$f^{i,j,r}_{\ell}$, we have
\begin{eqnarray}
a^{i,j,r}_{2\ell+1}= f^{i,j,r}_{\ell}  \quad \text{\rm and} \quad
a^{i,j,r}_{2\ell}
=f^{i,j,r}_{\ell-1}+f^{i-1,j,r}_{\ell-1}+f^{i,j-1,r}_{\ell-1}+
f^{i,j,r-1}_{\ell-1},
\end{eqnarray}
whence eq.~(\ref{E:o4}) follows from Proposition \ref{P:reflect} for
the case $k=4$. I.e.
\begin{equation}
\omega^{i,j,r}_{\ell}=a^{i,j,r}_{\ell}-a^{j,i,r}_{\ell}-
a^{r,j,i}_{\ell}-a^{i,r,j}_{\ell}
+a^{j,r,i}_{\ell}+a^{r,i,j}_{\ell}.
\end{equation}
Next, by construction, whence (b) and the proof of the lemma is
complete.
\end{proof}
Once $\omega^{i,j,r}_{\ell}$ for arbitrary $(i,j,r)\in W_{3}$ can be
calculated follow the routine given in Lemma \ref{L:p4}, we arrive
at the following algorithm:
\begin{corollary}\label{C:par4}
A random $4$-noncrossing partition can be generated, in $O(N^4)$
pre-processing time and $O(N^{4})$ space complexity, with uniform
probability in linear time. Each $4$-noncrossing partition is
generated with $O(N)$ space and time complexity, see Algorithm 4.
\end{corollary}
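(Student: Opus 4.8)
The plan is to derive Corollary~\ref{C:par4} as the $k=4$ specialization of Theorem~\ref{T:algpk}, with the only extra work being to verify that the abstract complexity bounds $O(N^k)$ become the concrete bounds $O(N^4)$ and that the combinatorial machinery of Lemma~\ref{L:p4} supplies exactly the quantities the generic algorithm needs. First I would recall that, by Theorem~\ref{T:bijection}(1) and the tableau interpretation, a $4$-noncrossing partition of $[N]$ is the same datum as a $\mathcal{P}_{W_3}$-walk from $\delta=(2,1,0)$ to itself of length $2N$; hence it suffices to sample such a walk uniformly. As in the proof of Theorem~\ref{T:algpk}, I would set up the Markov process $(X^i)_{i=0}^{2N}$ on shapes with at most three rows, whose transition probability from $\lambda^i$ to $\lambda^{i+1}_\alpha$ is ${\sf V}_4(\lambda^{i+1}_\alpha,2N-i-1)/{\sf V}_4(\lambda^i,2N-i)$, and observe by the same telescoping product that the resulting walk is uniform. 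Generating one walk visits $2N$ steps, each requiring a choice among the $\le 2k-1 = 7$ admissible elementary moves, so each partition is produced in $O(N)$ time and space.

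Next I would address the pre-processing. The transition weights ${\sf V}_4(\lambda^h_\alpha, 2N-h)$ equal $\omega^{\nu}_{4,2N-h}$ for the point $\nu=(\nu_1,\nu_2,\nu_3)\in W_3$ recording $\nu_i = (\text{boxes in row }i) + k + i - 3$. By Lemma~\ref{L:p4}(a), each $\omega^{i,j,r}_\ell$ is a fixed signed sum of $O(1)$ many coefficients $f^{a,b,c}_\ell$ with $(a,b,c)$ a permutation of a neighbour of $(i,j,r)$, so it is enough to tabulate $f^{a,b,c}_\ell$ for all $(a,b,c)$ in the relevant range and all $0\le\ell\le N-1$. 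The support condition $f^{a,b,c}_\ell = 0$ for $a+b+c\ge \ell+3$ confines the nonzero entries to $O(N^3)$ triples, and the recursion of Lemma~\ref{L:p4}(b) computes each entry from $O(1)$ previously computed ones; iterating over $\ell$ and the three coordinates with four nested {\bf For}-loops fills the whole table in $O(N\cdot N^3)=O(N^4)$ time and $O(N^4)$ space. Given the table, any single $\omega^{i,j,r}_\ell$, hence any transition probability, is read off in $O(1)$ time, so all weights needed along any sampling path are available. Assembling these estimates gives the claimed $O(N^4)$ pre-processing time, $O(N^4)$ space, linear-time per-sample generation, matching Algorithm~4.

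The main obstacle is bookkeeping rather than conceptual: one must check that the index-to-coordinate dictionary $\nu_i \leftrightarrow$ (row $i$ box count) is applied consistently, that the boundary cases (rows that are empty, or the initial/terminal constraints $\lambda^0=\lambda^{2N}=\varnothing$) are handled so that {\sf Random} never sees an all-zero weight vector, and that the reflection-principle signs in Lemma~\ref{L:p4}(a) are carried through correctly into the $f$-table lookups so that the computed $\omega$ values — which are genuine cardinalities — are nonnegative and sum, along each step, to ${\sf V}_4(\lambda^i,2N-i)$. Since these are exactly the verifications already performed in the general case in Theorem~\ref{T:algpk}, specialized to $k=4$ and $W_3$, the corollary follows; I would therefore write the proof as "This is the case $k=4$ of Theorem~\ref{T:algpk}, using Lemma~\ref{L:p4} in place of Lemma~\ref{L:pk}; the support bound $a+b+c\ge\ell+3$ and the $O(1)$-term recursion of Lemma~\ref{L:p4}(b) give the $O(N^4)$ pre-processing and space bounds, and sampling proceeds as in Algorithm~4."
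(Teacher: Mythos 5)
Your proposal matches the paper's (essentially unstated) argument: the paper simply derives Corollary~\ref{C:par4} by specializing Theorem~\ref{T:algpk} to $k=4$, with Lemma~\ref{L:p4} supplying the explicit reflection-principle formulas and recursion for the $\omega^{i,j,r}_{\ell}$, exactly as you describe. Your complexity accounting and the telescoping-product uniformity argument coincide with the paper's; the only cosmetic slip is counting ``$\le 2k-1=7$'' admissible moves per step (for plain $4$-noncrossing partitions each half-step offers $k=4$ choices), which does not affect the $O(N)$ bound.
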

\begin{algorithm}\label{A:alg4}
\begin{algorithmic}[1]
\STATE{\it Tableaux} (Initialize the sequence of shapes,
$\{\lambda^{i}\}_{i=0}^{i=2N}$)

\STATE {$\lambda^{0}\leftarrow\varnothing$,
$\lambda^{2N}\leftarrow\varnothing$, $i\leftarrow 1$}

\WHILE {$i < 2N$ }

\IF {$i$ is even} \STATE  X[0]$\leftarrow$ ${\sf
V}(\lambda^{i+1}_{\varnothing}, 2N-(i+1))$

\STATE  X[1]$\leftarrow$ ${\sf V}(\lambda^{i+1}_{-\square_{1}},
2N-(i+1))$

\STATE  X[2]$\leftarrow$ ${\sf V}(\lambda^{i+1}_{-\square_{2}},
2N-(i+1))$

\STATE  X[3]$\leftarrow$ ${\sf V}(\lambda^{i+1}_{-\square_{3}},
2N-(i+1))$

\ENDIF

\IF {$i$ is odd}

\STATE  X[0]$\leftarrow$ ${\sf V}(\lambda^{i+1}_{\varnothing},
2N-(i+1))$

\STATE  X[1]$\leftarrow$ ${\sf V}(\lambda^{i+1}_{+\square_{1}},
2N-(i+1))$

\STATE  X[2]$\leftarrow$ ${\sf V}(\lambda^{i+1}_{+\square_{2}},
2N-(i+1))$

\STATE  X[3]$\leftarrow$ ${\sf V}(\lambda^{i+1}_{+\square_{3}},
2N-(i+1))$ \ENDIF

\STATE {\it sum} $\leftarrow$ X[0]+X[1]+X[2]+X[3]

\STATE {\it Shape} $\leftarrow$ {\sf Random}({\it sum}) ({\sf
Random} generates the random shape $\lambda^{i+1}_{+\square_{j}(or
-\square_{j})}$ with probability ${X[j]}/{\it sum}$ or
$\lambda^{i+1}_{\varnothing}$ with probability $X[0]/sum$)

\STATE $i \leftarrow i+1$

\STATE Insert {\it Shape} into {\it Tableau} (the sequence of
shapes).

\ENDWHILE

\STATE  {\sf Map}({\it Tableau}) (maps {\it Tableau} into its
corresponding $4$-noncrossing partition)
\end{algorithmic}
\caption{\small Uniform generation of $4$-noncrossing partitions}
\end{algorithm}
\subsection{Example: $2$-regular, random $4$-noncrossing partitions}
\label{S:p24}
\begin{lemma}\label{L:regq4}
The number of $\mathcal{B}_{W_{3}}^*$-walks ending at $\nu\in W_{3}$
of length $2\ell$, where $1\leq \ell\leq N$ is given by
\begin{equation}
\sigma^{i,j,r;*}_{2\ell} = \sum_{h}(-1)^{h}{\ell\choose
h}\omega^{i,j,r}_{2(\ell-h)+1}.
\end{equation}
Furthermore, we obtain the recurrence of
$\sigma^{i,j,r;*}_{2\ell-1}$ given by
\begin{equation}
\sigma^{i,j,r;*}_{2\ell-1}
=\sigma^{i,j,r;*}_{2\ell-2}+\sigma^{i-1,j,r;*}_{2\ell-2}
+\sigma^{i,j-1,r;*}_{2\ell-2}+\sigma^{i,j,r-1;*}_{2\ell-2}.
\end{equation}
\end{lemma}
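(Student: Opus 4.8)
The plan is to specialize Lemma~\ref{L:regq} to the case $k=4$, so that the two displayed formulas are immediate instances of the general identities with $W_{k-1}=W_3$, $\nu=(i,j,r)$, and the basis vectors $\mathbf{e_0}=\mathbf 0$, $\mathbf{e_1}$, $\mathbf{e_2}$, $\mathbf{e_3}$ of $\mathbb{Z}_3$. First I would recall from Corollary~\ref{C:bi} and Lemma~\ref{L:bijection-2} that $\mathcal P_{W_3}$-walks of length $2\ell+1$ are in bijection with $\mathcal B_{W_3}$-walks of length $2\ell$, hence $\sigma^{i,j,r}_{2\ell}=\omega^{i,j,r}_{2(\ell-h)+1}$ at $h=0$ and, more generally, $\sigma^{i,j,r}_{k,2\ell}=\omega^{i,j,r}_{k,2\ell+1}$.

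Next I would reproduce the inclusion–exclusion argument of Lemma~\ref{L:regq} verbatim in this setting: a $\mathcal B_{W_3}^*$-walk is a $\mathcal B_{W_3}$-walk containing no consecutive elementary-move pair $(+\mathbf{e_1},-\mathbf{e_1})$ (loops at vertex $1$), and deleting $h$ such pairs from an arbitrary $\mathcal B_{W_3}$-walk of length $2\ell$ gives one of length $2(\ell-h)$, with $\binom{\ell}{h}$ ways to reinsert them. Sieving yields
\begin{equation*}
\sigma^{i,j,r;*}_{2\ell}=\sum_{h}(-1)^{h}\binom{\ell}{h}\sigma^{i,j,r}_{2(\ell-h)}
=\sum_{h}(-1)^{h}\binom{\ell}{h}\omega^{i,j,r}_{2(\ell-h)+1},
\end{equation*}
which is the first displayed equation. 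For the odd-length recurrence, I would observe that in any $\mathcal B_{W_3}$-walk an odd step is $+\mathbf{e_t}$ for some $t\in\{0,1,2,3\}$, and adding such a step preserves the loop-free property (since the loop-forbidding condition only constrains an odd $+\mathbf{e_1}$ followed by an even $-\mathbf{e_1}$, which cannot be created by appending a single odd step at the end). Hence each loop-free walk of length $2\ell-1$ ending at $(i,j,r)$ arises uniquely from a loop-free walk of length $2\ell-2$ ending at $(i,j,r)-\mathbf{e_t}$, giving
\begin{equation*}
\sigma^{i,j,r;*}_{2\ell-1}
=\sum_{t=0}^{3}\sigma^{(i,j,r)-\mathbf{e_t};*}_{2\ell-2}
=\sigma^{i,j,r;*}_{2\ell-2}+\sigma^{i-1,j,r;*}_{2\ell-2}
+\sigma^{i,j-1,r;*}_{2\ell-2}+\sigma^{i,j,r-1;*}_{2\ell-2},
\end{equation*}
which is the second displayed equation.

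The only mild subtlety — and the step I would flag as the main obstacle — is justifying that the loop-free constraint commutes cleanly with both the deletion/reinsertion of $(+\mathbf{e_1},-\mathbf{e_1})$ pairs and with appending a terminal odd step, i.e.\ that no "new'' forbidden pair is accidentally created or destroyed by these operations; this is exactly the place where one must invoke Corollary~\ref{C:bi}(b) to know that loop-free braids correspond to $2$-regular partitions and that the forbidden configuration is precisely the consecutive pair $(+\square_1,-\square_1)$. Since all of this is already carried out for general $k$ in Lemma~\ref{L:regq}, the proof for $k=4$ is a direct substitution, and I would simply write: "This is the special case $k=4$ of Lemma~\ref{L:regq}, upon writing $\nu=(i,j,r)$ and expanding the sum $\sum_{j=0}^{k-1}$ over the four unit vectors $\mathbf{e_0},\mathbf{e_1},\mathbf{e_2},\mathbf{e_3}$.''
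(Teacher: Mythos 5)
Your proof is correct and follows exactly the route the paper intends: the paper states Lemma~\ref{L:regq4} without proof, as the immediate specialization of Lemma~\ref{L:regq} to $k=4$, and your argument is precisely that specialization (with the inclusion--exclusion over removed $(+\mathbf{e_1},-\mathbf{e_1})$ pairs and the expansion of $\sum_{t=0}^{3}\sigma^{\nu-\mathbf{e_t};*}_{2\ell-2}$ using $\mathbf{e_0}=\mathbf{0}$). Nothing further is needed.
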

\begin{corollary}\label{C:r4}
A random $2$-regular, $4$-noncrossing partition can be generated, in
$O(N^{5})$ pre-processing time and $O(N^{4})$ space complexity, with
uniform probability in linear time. Each $2$-regular,
$4$-noncrossing partition is generated with $O(N)$ space and time
complexity, see Algorithm 5, 6.
\end{corollary}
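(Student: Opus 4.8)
The plan is to read off Corollary~\ref{C:r4} as the case $k=4$ of Theorem~\ref{T:regpart}, with the generic quantities $\omega^{\nu}_{k,\ell}$ and $\sigma^{\nu;*}_{k,s}$ replaced by the explicit three-coordinate recursions of Lemma~\ref{L:p4} and Lemma~\ref{L:regq4}, so that Algorithms 5 and 6 become the concrete instances of Algorithms 2 and 3. First I would invoke Corollary~\ref{C:bi}(b): the bijection $\vartheta$ restricts to a bijection between $2$-regular, $4$-noncrossing partitions on $[N]$ and loop-free $4$-noncrossing braids on $[N-1]$, the latter being in one-to-one correspondence with $\mathcal{B}^*_{W_3}$-walks from $\epsilon$ to itself of length $2(N-1)$; since $\vartheta^{-1}$ and the tableau-to-partition map are computed in $O(N)$ time, it suffices to sample such a walk uniformly.

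For the sampling step I would interpret the walk as the sampling path of a Markov process on shapes with at most three rows, exactly as in the proof of Theorem~\ref{T:regpart}. Encoding a shape with $\nu_i-i-1$ boxes in the $i$-th row as the point $\nu=(\nu_1,\nu_2,\nu_3)\in W_3$, the transition probabilities at step $i$ are the ratios of the counts $W^*(\lambda^i,2N-i)$ and $V^*(\lambda^i_\alpha,2N-i)$ appearing in eq.~(\ref{E:ws}) and the case analysis following it, all expressible through the $\sigma^{i,j,r;*}_{s}$. The product of the transition probabilities telescopes to $1/W^*(\varnothing,2N)$, which yields uniformity, and each walk is produced in $O(N)$ time and space. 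The one genuinely delicate point, which I expect to be the main obstacle, is the bookkeeping that enforces loop-freeness: one must forbid exactly the elementary move $(+\square_1,-\square_1)$ while keeping the walk inside $W_3$, and check that zeroing out the corresponding entries of the array X (this is the role of Algorithm 6 and of the indicator $\beta$ in the formula for $V^*_k(\lambda^{2s+1}_\alpha,u+1)$) still leaves a correctly normalized conditional distribution at every step.

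It remains to bound the preprocessing. By Lemma~\ref{L:p4}(b) I would compute $f^{i,j,r}_\ell$ for all $(i,j,r)$ and all $0\le\ell\le N$ with three nested \textbf{For}-loops over the coordinates plus one over $\ell$; each entry depends on $O(1)$ predecessors, so this costs $O(N^4)$ time and $O(N^4)$ space. From these, Lemma~\ref{L:p4}(a) (equivalently Proposition~\ref{P:reflect} with $k=4$) gives $\omega^{i,j,r}_s$ for all $(i,j,r)\in W_3$ and $1\le s\le 2N+1$ as the $6$-term alternating sum over $\mathbb{S}_3$, again $O(N^4)$ time. Then, as in Theorem~\ref{T:regpart}, Lemma~\ref{L:regq4} produces each $\sigma^{i,j,r;*}_{s}$ in $O(N)$ time — the binomial sum $\sum_h(-1)^h\binom{\ell}{h}\omega^{i,j,r}_{2(\ell-h)+1}$ when $s=2\ell$ and the four-term recurrence when $s$ is odd — so over the $O(N^3)$ choices of $(i,j,r)$ and $O(N)$ choices of $s$ this costs $O(N^4)\cdot O(N)=O(N^5)$ time and $O(N^4)$ space. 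Finally each transition probability is an $O(1)$ lookup in the resulting table, so the total is $O(N^5)$ preprocessing time and $O(N^4)$ space, after which Algorithm 5 emits each $2$-regular, $4$-noncrossing partition in $O(N)$ time and space, as claimed.
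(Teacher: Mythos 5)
Your proposal is correct and follows essentially the same route as the paper: Corollary~\ref{C:r4} is just the case $k=4$ of Theorem~\ref{T:regpart}, obtained by combining the bijection of Corollary~\ref{C:bi}(b) with the Markov-process interpretation of $\mathcal{B}^*_{W_3}$-walks, the telescoping product of transition probabilities for uniformity, and the $f\to\omega\to\sigma$ preprocessing chain of Lemmas~\ref{L:p4} and~\ref{L:regq4}, which yields exactly the claimed $O(N^5)$ time and $O(N^4)$ space bounds. Your explicit accounting of where the extra factor of $N$ enters (the $O(N)$-term binomial sum for each $\sigma^{i,j,r;*}_{2\ell}$ over $O(N^4)$ index pairs) and your flagging of the loop-freeness bookkeeping match the paper's argument for the general $k$ case.
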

\begin{algorithm}\label{A:algr4}
\begin{algorithmic}[1]
\STATE{\it Tableaux} (Initialize the sequence of shapes to be a list
$\{\lambda^{i}\}_{i=0}^{i=2N}$)

\STATE {$\lambda^{0}\leftarrow\varnothing$,
$\lambda^{2N}\leftarrow\varnothing$,
$\lambda^{2N-1}\leftarrow\varnothing$, $i\leftarrow 1$}

\WHILE {$i < 2N-1$ }

\IF {$i$ is even}

\STATE  X[0]$\leftarrow$ ${\sf V^{*}}(\lambda^{i+1}_{\varnothing},
2N-(i+1))$

\STATE  X[1]$\leftarrow$ ${\sf V^{*}}(\lambda^{i+1}_{+\square_{1}},
2N-(i+1))$, X[2]$\leftarrow$ ${\sf
V^{*}}(\lambda^{i+1}_{-\square_{1}}, 2N-(i+1))$

\STATE  X[3]$\leftarrow$ ${\sf V^{*}}(\lambda^{i+1}_{+\square_{2}},
2N-(i+1))$, X[4]$\leftarrow$ ${\sf
V^{*}}(\lambda^{i+1}_{-\square_{2}}, 2N-(i+1))$

\STATE  X[5]$\leftarrow$ ${\sf V^{*}}(\lambda^{i+1}_{+\square_{3}},
2N-(i+1))$, X[6]$\leftarrow$ ${\sf
V^{*}}(\lambda^{i+1}_{-\square_{3}}, 2N-(i+1))$

\ELSE

\STATE  X[0]$\leftarrow$ ${\sf W^{*}}(\lambda^{i+1}_{\varnothing},
2N-(i+1))$

\STATE X[1]$\leftarrow$ ${\sf W^*}(\lambda^{i+1}_{+\square_{1}},
2N-(i+1))$, X[2]$\leftarrow$ ${\sf
W^{*}}(\lambda^{i+1}_{-\square_{1}}, 2N-(i+1))$

\STATE  X[3]$\leftarrow$ ${\sf W^{*}}(\lambda^{i+1}_{+\square_{2}},
2N-(i+1))$, X[4]$\leftarrow$ ${\sf
W^{*}}(\lambda^{i+1}_{-\square_{2}}, 2N-(i+1))$

\STATE  X[5]$\leftarrow$ ${\sf W^{*}}(\lambda^{i+1}_{+\square_{3}},
2N-(i+1))$, X[6]$\leftarrow$ ${\sf
W^{*}}(\lambda^{i+1}_{-\square_{3}}, 2N-(i+1))$

\STATE {\sf FLAG}(flag0, flag1, flag2, flag3)

\STATE {\it sum} $\leftarrow$ X[0]+X[1]+X[2]+X[3]+X[4]+X[5]+X[6]

\ENDIF

\STATE {\it Shape} $\leftarrow$ {\sf Random}({\it sum}) ({\sf
Random} generates the random shape $\lambda^{i+1}_{+\square_{j}(or
-\square_{j})}$ with probability {X[2j-1]}$/{\it sum}$ (or
X[2j]$/sum$) or $\lambda^{i+1}_{\varnothing}$ with probability
X[0]$/sum$)

\STATE {flag0 $\leftarrow$ 1, flag1 $\leftarrow$ 1, flag2
$\leftarrow$ 1, flag3 $\leftarrow$ 1}

\IF {$i$ is even and Shape=$\lambda^{i+1}_{+\square_{1}}$}

\STATE flag1 $\leftarrow$ 0

\ENDIF

\IF {$i$ is even and Shape=$\lambda^{i+1}_{-\square_{1}}$}

\STATE flag2 $\leftarrow$ 0

\ENDIF

\IF {$i$ is even and Shape=$\lambda^{i+1}_{-\square_{2}}$}

\STATE flag2 $\leftarrow$ 0

\ENDIF

\IF {$i$ is even and Shape=$\lambda^{i+1}_{-\square_{3}}$}

\STATE flag2 $\leftarrow$ 0

\ENDIF

\IF {$i$ is even and Shape=$\lambda^{i+1}_{+\square_{2}}$}

\STATE flag3 $\leftarrow$ 0

\ENDIF

\IF {$i$ is even and Shape=$\lambda^{i+1}_{+\square_{3}}$}

\STATE flag3 $\leftarrow$ 0

\ENDIF

\IF {$i$ is even and Shape=$\lambda^{i+1}_{\varnothing}$}

\STATE flag0 $\leftarrow$ 0

\ENDIF

\STATE Insert {\it Shape} into {\it Tableaux} (the sequence of
shapes)

\STATE $i \leftarrow i+1$

\ENDWHILE

\STATE  {\sf Map}({\it Tableaux}) (maps {\it Tableaux} into its
corresponding $2$-regular, $4$-noncrossing partition)
\end{algorithmic}
\caption{\small Uniform generation of $2$-regular, $4$-noncrossing
partitions.}
\end{algorithm}

\begin{algorithm}
\label{A:alg5}
\begin{algorithmic}[1]
\STATE{\it FLAG}(flag0, flag1, flag2, flag3)

\IF {flag0 =0}

\STATE {X[2], X[4], X[6]$\leftarrow$ 0}

\ENDIF

\IF {flag1 =0}

\STATE {X[0], X[1], X[2], X[3], X[5]$\leftarrow$ 0}

\ENDIF

\IF {flag2 =0}

\STATE {X[1], X[2], X[3], X[4], X[5], X[6]$\leftarrow$ 0}

\ENDIF

\IF {flag3 =0}

\STATE {X[1], X[3], X[5]$\leftarrow$ 0}

\ENDIF

\end{algorithmic}
\caption{\small {\sf FLAG}: Distinguish the last odd step}
\end{algorithm}
\bibliographystyle{plain}
\bibliography{unik}

\begin{thebibliography}{1}

\bibitem{Xin:06}
M.~Bousquet-M\'{e}lou and G.~Xin.
\newblock On partitions avoiding 3-crossings.
\newblock {\em S\'em. Lothar. Combin.}, 54(B54e), 2006.

\bibitem{Chen}
W.Y.C. Chen, E.Y.P. Deng, R.R.X. Du, R.P. Stanley, and C.H. Yan.
\newblock Crossing and nesting of matchings and partitions.
\newblock {\em Trans. Amer. Math. Soc.}, 359:1555--1575, 2007.

\bibitem{Chen:pnas}
W.Y.C. Chen, H.S.W Han, and C.M. Reidys.
\newblock Random $k$-noncrossing {RNA} structures.
\newblock {\em PNAS}, 2009.
\newblock to appear.

\bibitem{Reidys:08tan}
W.Y.C. Chen, J.~Qin, and C.M. Reidys.
\newblock Crossings and nestings in tangled diagrams.
\newblock {\em Electron. J. Comb.}, 15:R86, 2008.

\bibitem{Grabiner}
D.J. Grabiner and P.~Magyar.
\newblock Random walks in {W}eyl chambers and the decomposition of tensor
  powers.
\newblock {\em J. Algebr. Comb.}, 2:239--260, 1993.

\bibitem{Reidys:07pseu}
E.Y. Jin, J.~Qin, and C.M. Reidys.
\newblock Combinatorics of \textsc{RNA} structures with pseudoknots.
\newblock {\em Bull. Math. Biol.}, 70:45--67, 2007.

\bibitem{Wilf:book}
A.~Nijenhuis and H.S. Wilf.
\newblock {\em Combinatorial Algorithms}.
\newblock Academic Press, 1978.

\bibitem{Reidys:frame}
J.~Qin and C.M. Reidys.
\newblock A framework for {RNA} tertiary interaction.
\newblock Technical Report 0710.3523 ArXiv. Available at
  http://arxiv.org/abs/0710.3523, 2007.

\bibitem{Wilf:77}
H.S. Wilf.
\newblock A unified setting for sequencing, ranking, and selection algorithms
  for combinatorial objects.
\newblock {\em Adv. Math.}, 24:281--291, 1977.

\end{thebibliography}
\end{document}